\newtheorem{remark}[theorem]{Remark}
\newtheorem{example}[theorem]{Example}
\renewcommand*\env@matrix[1][*\c@MaxMatrixCols c]{%
	\hskip -\arraycolsep
	\let\@ifnextchar\new@ifnextchar
	\array{#1}}
\DeclareMathOperator{\co}{co}
\DeclareMathOperator{\cl}{cl}
\DeclareMathOperator{\sap}{sp}
\DeclareMathOperator{\dist}{dist}
\DeclareMathOperator{\proj}{proj}
\DeclareMathOperator{\eq}{eq}
\DeclareMathOperator*{\argmin}{arg\,min}
\DeclareMathOperator*{\argmax}{arg\,max}
\DeclareMathOperator{\lev}{lev_{\leq\gamma}}
\DeclareMathOperator{\inter}{int}
\DeclareMathOperator{\bd}{bd}
\DeclareMathOperator{\circu}{Circ}
\DeclareMathOperator{\trid}{Trid}
\newcommand{\BB}{\mathbb{B}}
\newcommand{\NN}{\mathbb{N}}
\newcommand{\rl}{\mathbb R}
\newcommand{\rlnneg}{\mathbb R_{\geq0}}
\newcommand{\rln}{{\mathbb R}^{n}}
\newcommand{\rlm}{{\mathbb R}^{m}}
\newcommand{\rlp}{{\mathbb R}^{p}}
\newcommand{\rlpn}{{\mathbb R}^{p\times n}}
\newcommand{\rlnn}{{\mathbb R}^{n \times n}}
\newcommand{\on}{{\mathbbm 1}_n}
\newcommand{\om}{{\mathbbm 1}_{m}}
\newcommand{\until}[1]{\{1,\dots,#1\}}
\newcommand{\GG}{\mathcal{G}}
\newcommand{\LL}{\mathcal{L}}
\newcommand{\VV}{\mathcal{V}}
\newcommand{\AC}{\mathcal{AC}}
\newcommand{\CC}{\mathcal{C}}
\newcommand{\CCC}{\mathcal{C}^{1,1}}
\newcommand{\EE}{\mathcal{E}}
\newcommand{\marginJC}[1]{\marginpar{\color{red}\tiny\ttfamily#1}}
\newcommand{\margins}[1]{\marginpar{\color{blue}\tiny\ttfamily#1}}
\renewcommand{\marginJC}[1]{}
\renewcommand{\margins}[1]{}
\newcommand{\oprocendsymbol}{\hbox{$\bullet$}}
\newcommand{\oprocend}{\relax\ifmmode\else\unskip\hfill\fi\oprocendsymbol}
\newcommand{\myclearpage}{\clearpage}
\renewcommand{\myclearpage}{}
\title{%
  Distributed Coordination for Nonsmooth Convex \\ Optimization via
  Saddle-Point Dynamics\thanks{Submitted to the SIAM Journal on
    Control and Optimization on June 29, 2016, revised version
    on~\today. This work was supported by the German National Academic
    Foundation, the Dr.~J{\"u}rgen and Irmgard Ulderup Foundation,
    Award FA9550-15-1-0108, and NSF Award CNS-1329619. Preliminary
    versions of this manuscript appeared
    as~\cite{SKN-FA-JC:16-cdc,SKN-JC:15-cdc}~at the IEEE Conference on
    Decision and Control.}  }
\author{%
	Simon K. Niederl{\"a}nder\thanks{Institute for Systems Theory and Automatic Control, University of Stuttgart, 70550 Stuttgart, Germany, Email: {\tt\small simon.niederlaender@ist.uni-stuttgart.de}.}
\and
	Jorge Cort\'{e}s\thanks{Department of Mechanical and Aerospace Engineering, Jacobs School of Engineering, University of California, San Diego, 9500 Gilman Drive, La Jolla, CA 92093, United States, Email: {\tt\small cortes@ucsd.edu}.}
}
\begin{document}
\maketitle
\slugger{sicon}{2015}{xx}{x}{x--x}

\begin{abstract}
  This paper considers continuous-time coordination algorithms for
  networks of agents that seek to collectively solve a general class
  of nonsmooth convex optimization problems with an inherent
  distributed structure. Our algorithm design builds on the
  characterization of the solutions of the nonsmooth convex program as
  saddle points of an augmented Lagrangian. We show that the
  associated saddle-point dynamics are asymptotically correct but, in
  general, not distributed because of the presence of a global penalty
  parameter. This motivates the design of a discontinuous
  saddle-point-like algorithm that enjoys the same convergence
  properties and is fully amenable to distributed implementation. Our
  convergence proofs rely on the identification of a novel global
  Lyapunov function for saddle-point dynamics. This novelty also
  allows us to identify mild convexity and regularity conditions on
  the objective function that guarantee the exponential convergence
  rate of the proposed algorithms for convex optimization problems
  subject to equality constraints.  Various examples illustrate our
  discussion.
\end{abstract}

\begin{keywords}
  nonsmooth convex optimization, saddle-point dynamics, set-valued and
  projected dynamical systems, distributed algorithms, multi-agent
  systems.
\end{keywords}

\begin{AMS}
	90C25, % Convex programming
	49J52, % Nonsmooth analysis
	34A60, % Differential inclusions
	34D23, % Global stability
	68W15, % Distributed algorithms
	90C35. % Programming involving graphs or networks
\end{AMS}

%\begin{DOI}\end{DOI}

\pagestyle{myheadings}
\thispagestyle{plain}
\markboth{S. K. NIEDERL\"{A}NDER AND J. CORT\'{E}S}{DISTRIBUTED
	COORDINATION FOR NONSMOOTH CONVEX OPTIMIZATION}

%% INTRODUCTION %%%%%%%%%%%%%%%%%%%%%%%%%%%%%%%%%%%%%%%%%%%%%%%%%%%%%%%%%%%%%%%%
\section{Introduction}\label{sec1}

Distributed convex optimization problems arise in a wide range of
scenarios involving multi-agent systems, including network flow
optimization, control of distributed energy resources, resource
allocation and scheduling, and multi-sensor fusion. In such contexts,
the goals and performance metrics of the agents are encoded into
suitable objective functions whose optimization may be subject to a
combination of physical, communication, and operational
constraints. Decentralized algorithmic approaches to solve these
optimization problems yield various advantages over centralized
solvers, including reduced communication and computational overhead at
a single point via spatially distributed processors, robustness
against malfunctions, or the ability to quickly react to changes. In
this paper we are motivated by network scenarios that give rise to
general nonsmooth convex optimization problems with an intrinsic
distributed nature. We consider convex programs with an additively
separable objective function and local coupling equality and
inequality constraints. Our objective is to synthesize distributed
coordination algorithms that allow each agent to find their own
component of the optimal solution vector. This setup substantially
differs from consensus-based distributed optimization where agents
agree on the entire optimal solution vector. We also seek to provide
algorithm performance guarantees by way of characterizing the
convergence rate of the network state towards the optimal solution. We
see these characterizations as a stepping stone towards the
development of strategies that are robust against disturbances and can
accommodate a variety of resource constraints.

\subsection*{Literature Review}\label{sec1:sub1}

The interest on networked systems has stimulated the synthesis of
distributed strategies that have agents interacting with neighbors to
coordinate their computations and solve convex optimization problems
with constraints~\cite{DPB:99,DPB-JNT:97}. A majority of works focus
on consensus-based approaches, where individual agents maintain,
communicate, and update an estimate of the entire solution vector of
the optimization problem, implemented in discrete time, see
e.g.,~\cite{JCD-AA-MJW:12,BJ-MR-MJ:09,AN-AO:09,AN-AO-PAP:10,MZ-SM:12}
and references therein. Recent
work~\cite{BG-JC:14-tac,JL-CYT:12,DMN-JC:16-sicon,JW-NE:11} has
proposed a number of continuous-time solvers whose convergence
properties can be studied using notions and tools from classical
stability analysis tools.  This continuous-time framework facilitates
the explicit computation of the evolution of candidate Lyapunov
functions and their Lie derivatives, opening the way to a systematic
characterization of additional desirable algorithm properties such as
speed of convergence, disturbance rejection, and robustness to
uncertainty.  In contrast to consensus-based approaches, and of
particular importance to our work here, are distributed strategies
where each agent seeks to determine only its component of the optimal
solution vector (instead of the whole one) and interchanges
information with neighbors whose size is independent of the
networks'. Such strategies are particularly well suited for convex
optimization problems over networks that involve an aggregate
objective function that does not couple the agents' decisions but
local (equality or inequality) constraints that instead do. Dynamics
enjoying such scalability properties include the partition-based dual
decomposition algorithm for network optimization proposed
in~\cite{RC-GN:13}, the discrete-time algorithm for non-strict convex
problems in~\cite{IN-JS:08} that requires at least one of the exact
solutions of a local optimization problem at each iteration, and the
inexact algorithm in~\cite{IN-VN:14} that only achieves convergence to
an approximate solution of the optimization problem. In the context of
neural networks, the work~\cite{MF-PN-MQ:04} proposes a generalized
circuit for nonsmooth nonlinear optimization based on first-order
optimality conditions with convergence guarantees. However, the
proposed dynamics are not fully amenable to distributed implementation
due to the global penalty parameters involved. A common approach to
design such distributed strategies relies on the saddle-point or
primal-dual dynamics~\cite{KA-LH-HU:58,TK:56,BTP:70} corresponding to
the Lagrangian of the optimization problem. The work~\cite{DF-FP:10}
studies primal-dual gradient dynamics for convex programs subject to
inequality constraints. These dynamics are modified with a projection
operator on the dual variables to preserve their
nonnegativity. Although convergence in the primal variables is
established, the dual variables converge to some unknown point which
might not correspond to a dual solution. The work~\cite{DR-JC:15-tac}
introduces set-valued and discontinuous saddle-point algorithms
specifically tailored for linear programs. More recently, the
work~\cite{AC-BG-JC:17-sicon} studies the asymptotic convergence
properties of the saddle-point dynamics associated to general saddle
functions. Our present work contributes to this body of literature on
distributed algorithms based on saddle-point dynamics, with the key
distinctions of the generality of the problem considered and the fact
that our technical analysis relies on Lyapunov, rather than LaSalle,
arguments to establish asymptotic convergence and performance
guarantees. Another distinguishing feature of the present work is the
explicit characterization of the exponential convergence rate of
continuous-time coordination algorithms for convex optimization
problems subject to equality constraints.

\subsection*{Statement of Contributions}\label{sec1:sub2}

We consider generic nonsmooth convex optimization problems defined by
an additively separable objective function and local coupling
constraints. Our starting point is the characterization of the
primal-dual solutions of the nonsmooth convex program as saddle points
of an augmented Lagrangian which incorporates quadratic regularization
and $\ell_1$-exact-penalty terms to eliminate the inequality
constraints. This problem reformulation motivates the study of the
saddle-point dynamics (gradient descent in the primal variable and
gradient ascent in the dual variable) associated with the augmented
Lagrangian. Our first contribution is the identification of a novel
nonsmooth Lyapunov function which allows us to establish the
asymptotic correctness of the algorithm without relying on arguments
based on the LaSalle invariance principle. Our second contribution
pertains the performance characterization of the proposed coordination
algorithms. We restrict our study to the case when the convex
optimization problem is subject to equality constraints only. For this
scenario, we rely on the Lyapunov function identified in the
convergence analysis to provide sufficient conditions on the objective
function of the convex program to establish the exponential
convergence of the algorithm and characterize the corresponding
rate. Since the proposed saddle-point algorithm relies on a priori
global knowledge of a penalty parameter associated with the
exact-penalty terms introduced to ensure convergence to the solutions
of the optimization problem, our third contribution is an alternative,
discontinuous saddle-point-like algorithm that does not require such
knowledge and is fully amenable to distributed implementation over a
group of agents. We show that, given any solution of the
saddle-point-like algorithm, there exists a value of the penalty
parameter such that the trajectory is also a solution of the
saddle-point dynamics, thereby establishing that both dynamics enjoy
the same convergence properties. As an additional feature, the
proposed algorithm guarantees feasibility with respect to the
inequality constraints for any time.  Various examples illustrate our
discussion.

\subsection*{Organization}\label{sec1:sub3}

The paper is organized as follows. Section~\ref{sec2}~introduces basic
notions on nonsmooth analysis and set-valued dynamical
systems. Section~\ref{sec3} proposes the saddle-point algorithm to
solve the problem of interest, establishes its asymptotic correctness
and characterizes the exponential converge rate. Section~\ref{sec4}
proposes the saddle-point-like algorithm and studies its relation to
the saddle-point dynamics. Section~\ref{sec5} discusses the
distributed implementation of the algorithms and illustrates the
results through various examples. Finally, Section~\ref{sec6}
summarizes our conclusions and ideas for future work.

\myclearpage
%% PRELIMINARIES %%%%%%%%%%%%%%%%%%%%%%%%%%%%%%%%%%%%%%%%%%%%%%%%%%%%%%%%%%%%%%%
\section{Preliminaries}\label{sec2}

We let $\langle\cdot,\cdot\rangle$ denote the Euclidean inner product
and $\lVert\cdot\rVert$, respectively $\lVert\cdot\rVert_{\infty}$,
denote the $\ell_2$- and $\ell_\infty$-norms in $\rln$. The Euclidean
distance from a point $x\in\rln$ to a set $X\subset\rln$ is denoted by
$\dist(x,X)$. Let $\on=(1,\dots,1)\in\rln$. Given $x\in\rln$, let
$[x]^{+}=(\max\{0,x_{1}\},\dots,\max\{0,x_{n}\})\in\rln$. Given a set
$X\subset\rln$, we denote its convex hull by $\co X$, its interior by
$\inter X$, and its boundary by $\bd X$. The closure of $X$ is denoted
by $\cl X=\inter X\cup\bd X$. Let $\BB(x,\delta)=\{y\in\rln\mid\lVert
y-x\rVert<\delta\}$ and $\overline{\BB}(x,\delta)=\{y\in\rln\mid\lVert
y-x\rVert\leq\delta\}$ be the open and closed ball, centered at
$x\in\rln$ of radius $\delta>0$. Given $X,Y\subset\rln$, the Minkovski
sum of $X$ and $Y$ is defined by $X+Y=\{x+y\mid x\in X,\ y\in Y\}$.

A function $f:\rln\to\rl$ is~\emph{convex} if $f(\theta
x+(1-\theta)y)\leq\theta f(x)+(1-\theta)f(y)$ for all $x,y\in\rln$ and
$\theta\in[0,1]$. $f$ is~\emph{strictly convex} if $f(\theta
x+(1-\theta)y)<\theta f(x)+(1-\theta)f(y)$ for all $x\neq y$ and
$\theta\in(0,1)$. A function $f:\rln\to\rl$ is~\emph{positive
  definite} with respect to $X\subset\rln$ if $f(x)=0$ for all $x\in
X$ and $f(x)>0$ for all $x\notin X$. We say $f:\rln\to\rl$
is~\emph{coercive} with respect to $X\subset \rln$ if
$f(x)\rightarrow+\infty$ when $\dist(x,X)\rightarrow+\infty$. Given
$\gamma>0$, the~\emph{$\gamma$-sublevel set} of $f$ is
$\lev(f)=\{x\in\rln\mid f(x)\leq\gamma\}$. A bivariate function
$f:\rln\times\rlp\to\rl$ is~\emph{convex-concave} if it is convex in
its first argument and concave in its second. A~\emph{set-valued map}
$F:\rln\rightrightarrows\rln$ maps elements of $\rln$ to elements of
$2^{\rln}$. A set-valued map $F:\rln\rightrightarrows\rln$
is~\emph{monotone} if $\langle x-y,\xi_{x}-\xi_{y}\rangle\geq0$
whenever $\xi_{x}\in F(x)$ and $\xi_{y}\in F(y)$. Finally, $F$
is~\emph{strictly monotone}~if $\langle x-y,\xi_{x}-\xi_{y}\rangle>0$
whenever $\xi_{x}\in F(x)$, $\xi_{y}\in F(y)$ and $x\neq y$.

%%%%%%%%%%%%%%%%%%%%%%%%%%%%%%%%%%%%%%%%%%%%%%%%%%%%%%%%%%%%%%%%%%%%%%%%%%%%%%%%
\subsection{Nonsmooth Analysis}\label{sec2:sub1}

We review here relevant basic notions from nonsmooth
analysis~\cite{FHC:83} that will be most helpful in both our algorithm
design and analysis. A function $f:\rln\to\rl$ is~\emph{locally
  Lipschitzian at} $x\in\rln$ if there exist $\delta_{x}>0$ and
$L_{x}>0$ such that $\lvert f(y)-f(z)\rvert\leq L_{x}\lVert y-z\rVert$
for all $y,z\in\BB(x,\delta_{x})$. The function $f$ is~\emph{locally
  Lipschitzian} if it is locally Lipschitzian at $x$, for all
$x\in\rln$. A convex function is locally Lipschitzian
(cf.~\cite[Theorem 3.1.1, p. 16]{JBHU-CL:93}).

Rademacher's Theorem~\cite{FHC:83} states that locally Lipschitzian
functions are continuously differentiable almost everywhere (in the
sense of Lebesgue measure). Let $\Omega_{f}\subset\rln$ be the set of
points at which $f$ fails to be differentiable, and let $S$ denote any
other set of measure zero. The~\emph{generalized gradient} $\partial
f:\rln\rightrightarrows\rln$ of $f$ at $x\in\rln$ is defined by
\begin{equation*}
  \partial f(x)=\co\Big\{\lim_{i\to+\infty}\nabla f(x_{i})\mid x_{i}\to x,\
  x_{i}\notin S\cup\Omega_{f}\Big\}.
\end{equation*}
Note that if $f$ is continuously differentiable at $x\in\rln$, then
$\partial f(x)$ reduces to the singleton set $\{\nabla f(x)\}$. If $f$
is convex, then $\partial f(x)$ coincides with
the~\emph{subdifferential} (in the sense of convex analysis), that is,
the set of subgradients $\xi\in\rln$ satisfying $f(y)\geq
f(x)+\langle\xi,y-x\rangle$ for all $y\in\rln$ (cf.~\cite[Proposition
2.2.7]{FHC:83}). With this characterization, it is not difficult to
see that $f$ is (strictly) convex if and only if $\partial f$ is
(strictly) monotone.

A set-valued map $F$ is~\emph{upper semi-continuous} if, for all
$x\in\rln$ and $\varepsilon>0$, there exists $\delta>0$ such that
$F(y)\subset F(x)+\BB(0,\varepsilon)$ for all $y\in\BB(x,\delta)$. We
say $F$ is~\emph{locally bounded} if, for every $x\in\rln$, there
exist $\varepsilon>0$ and $\delta>0$ such that
$\lVert\xi\rVert\leq\varepsilon$ for all $\xi\in F(y)$ and all
$y\in\BB(x,\delta)$. The following result summarizes some important
properties of the generalized gradient~\cite{FHC:83}.

\begin{proposition}[Properties of the generalized gradient]\label{sec2:sub1:pr:genprop}
  Let $f:\rln\to\rl$ be locally Lipschitzian at $x\in\rln$. Then,
  \begin{enumerate}[(i)]
  \item $\partial f(x)\subset\rln$ is nonempty, convex and compact,
    and $\lVert\xi\rVert\leq L_{x}$, for all $\xi\in\partial f(x)$;
  \item $\partial f(x)$ is upper semi-continuous at $x\in\rln$.
  \end{enumerate}
\end{proposition}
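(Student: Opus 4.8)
The plan is to establish the four topological assertions in (i) directly from the definition of $\partial f$, and then to deduce the upper semi-continuity in (ii) by a compactness plus diagonalization argument. Throughout I will use the elementary fact that if $f$ is Lipschitzian on $\BB(x,\delta_x)$ with constant $L_x$ and differentiable at a point $y\in\BB(x,\delta_x)$, then $\lVert\nabla f(y)\rVert\leq L_x$: for any unit vector $v$ one has $\lvert\langle\nabla f(y),v\rangle\rvert=\lim_{t\downarrow0}\lvert f(y+tv)-f(y)\rvert/t\leq L_x$.

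For (i), Rademacher's Theorem guarantees that $\Omega_f$ has measure zero, so there are sequences $x_i\to x$ with $x_i\notin S\cup\Omega_f$; the gradients $\nabla f(x_i)$ all lie in the compact ball $\overline{\BB}(0,L_x)$, so by Bolzano--Weierstrass some subsequence converges. Hence the set
\[
  G(x):=\Big\{\lim_{i\to+\infty}\nabla f(x_i)\mid x_i\to x,\ x_i\notin S\cup\Omega_f\Big\}
\]
is nonempty and contained in $\overline{\BB}(0,L_x)$. Consequently $\partial f(x)=\co G(x)$ is nonempty and convex by construction, and, since $\overline{\BB}(0,L_x)$ is convex, also contained in $\overline{\BB}(0,L_x)$, which gives the bound $\lVert\xi\rVert\leq L_x$ for all $\xi\in\partial f(x)$. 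For compactness it suffices to note that $G(x)$ is closed — a limit of points of $G(x)$ is again a limit of gradients along a sequence converging to $x$, by a diagonal extraction — and bounded, hence compact, and that the convex hull of a compact subset of $\rln$ is compact (via Carathéodory's theorem and continuity of convex combinations on a compact domain).

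For (ii), I would argue by contradiction: if $\partial f$ fails to be upper semi-continuous at $x$, there are $\varepsilon_0>0$, a sequence $y_k\to x$, and $\xi_k\in\partial f(y_k)$ with $\dist(\xi_k,\partial f(x))\geq\varepsilon_0$. For $k$ large, $y_k$ and a neighborhood of it lie inside $\BB(x,\delta_x)$, so by (i) one has $\xi_k\in\overline{\BB}(0,L_x)$; passing to a subsequence, $\xi_k\to\xi$ with $\dist(\xi,\partial f(x))\geq\varepsilon_0$, so $\xi\notin\partial f(x)$. The main obstacle is to contradict this by showing $\xi\in\partial f(x)$. By Carathéodory's theorem write $\xi_k=\sum_{j=0}^{n}\lambda_{k,j}\zeta_{k,j}$ with $\lambda_{k,j}\geq0$, $\sum_j\lambda_{k,j}=1$, and $\zeta_{k,j}\in G(y_k)$; by the definition of $G(y_k)$ one may choose $z_{k,j}\notin S\cup\Omega_f$ with $\lVert z_{k,j}-y_k\rVert\leq1/k$ and $\lVert\nabla f(z_{k,j})-\zeta_{k,j}\rVert\leq1/k$. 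Extracting a further subsequence so that $\lambda_{k,j}\to\lambda_j$ and $\nabla f(z_{k,j})\to\zeta_j$ for every $j$ (all quantities being bounded) and observing that $z_{k,j}\to x$, we obtain $\zeta_j\in G(x)$, $\lambda_j\geq0$, $\sum_j\lambda_j=1$, and $\xi=\sum_j\lambda_j\zeta_j\in\co G(x)=\partial f(x)$, a contradiction. As an alternative to this self-contained route, one may instead invoke the support-function description $\partial f(x)=\{\xi\mid\langle\xi,v\rangle\leq f^{\circ}(x;v)\ \text{for all }v\in\rln\}$ in terms of Clarke's generalized directional derivative and use the upper semi-continuity of $(x,v)\mapsto f^{\circ}(x;v)$.
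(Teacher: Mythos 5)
The paper does not prove this proposition: it is quoted verbatim from Clarke's nonsmooth analysis (the citation \cite{FHC:83} accompanying the statement), so there is no in-paper argument to compare against. Your blind reconstruction is a correct, self-contained proof along the standard lines. The key ingredients are all handled properly: the bound $\lVert\nabla f(y)\rVert\leq L_{x}$ at nearby points of differentiability via one-sided directional difference quotients; nonemptiness of the limit set $G(x)$ from Rademacher's theorem plus Bolzano--Weierstrass; closedness of $G(x)$ by diagonal extraction; compactness of $\co G(x)$ via Carath\'eodory; and the upper semi-continuity argument that decomposes $\xi_{k}\in\partial f(y_{k})$ into at most $n+1$ points of $G(y_{k})$, approximates each by an actual gradient $\nabla f(z_{k,j})$ with $z_{k,j}\to x$, and passes to the limit to land in $\co G(x)$. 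One minor remark: the definition of $\partial f$ involves an arbitrary auxiliary null set $S$, and your argument tacitly fixes one such $S$ throughout; this is harmless for proving (i) and (ii) as stated (the conclusion holds for each fixed $S$, and in fact the set is independent of $S$, though you do not need that). The alternative route you mention via the support-function characterization $\partial f(x)=\{\xi\mid\langle\xi,v\rangle\leq f^{\circ}(x;v)\ \forall v\}$ is indeed how Clarke organizes the proof, but it requires first proving the equivalence of that characterization with the gradient-limit definition, so your direct argument is arguably the more economical one given the definition adopted in the paper.
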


Let $\CCC(\rln,\rl)$ denote the class of functions $f:\rln\to\rl$ that
are continuously differentiable and whose gradient $\nabla
f:\rln\to\rln$ is locally Lipschitzian. The~\emph{generalized Hessian}
$\partial(\nabla f):\rln\rightrightarrows\rlnn$ of $f$ at $x\in\rln$
is defined by
\begin{equation*}
  \partial(\nabla f)(x)=\co\Big\{\lim_{i\to+\infty}\nabla^{2}f(x_{i})\mid x_{i}
  \to x,\ x_{i}\notin\Omega_{f}\Big\}.
\end{equation*}
By construction, $\partial(\nabla f)(x)$ is a nonempty, convex and
compact set of symmetric matrices which reduces to the singleton set
$\{\nabla^{2}f(x)\}$ whenever $f$ is twice continuously differentiable
at $x\in\rln$~\cite{JBHU-JJS-VHN:84}. The following result is a direct
extension of Lebourg's Mean-Value Theorem to vector-valued
functions~\cite{RTR-RJBW:98}.

\begin{proposition}[Extended Mean-Value
  Theorem]\label{sec2:sub1:pr:emvt}
  Let $\nabla f:\rln\to\rln$ be locally Lipschitzian and let
  $x,y\in\rln$. Then,
  \begin{equation*}
    \nabla f(x)-\nabla f(y)\in\co\big\{\partial(\nabla f([x,y]))\big\}(x-y),
  \end{equation*}
  where $\co\big\{\partial(\nabla f([x,y]))\big\}(x-y)=\co\{H(x-y)\mid
  H\in\partial(\nabla f)(z)\ {\rm for~some}\ z\in[x,y]\}$, and
  $[x,y]=\{x+\theta(y-x)\mid\theta\in[0,1]\}$.
\end{proposition}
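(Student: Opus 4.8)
The plan is to reduce this vector-valued statement to the classical scalar Lebourg Mean-Value Theorem by a separating-hyperplane argument. Write $K=\co\{H(x-y)\mid H\in\partial(\nabla f)(z)\text{ for some }z\in[x,y]\}$; the claim is that $\nabla f(x)-\nabla f(y)\in K$. First I would check that $K$ is a nonempty compact convex subset of $\rln$. Since $\nabla f$ is locally Lipschitzian, its generalized Jacobian $\partial(\nabla f)$ is locally bounded and upper semi-continuous with nonempty compact convex values (the analogue of Proposition~\ref{sec2:sub1:pr:genprop} for the generalized Hessian, cf.~\cite{FHC:83}); hence over the compact segment $[x,y]$ the union $\bigcup_{z\in[x,y]}\partial(\nabla f)(z)$ is compact, its image under the continuous linear map $H\mapsto H(x-y)$ is compact, and its convex hull $K$ is compact as well (Carath\'eodory's theorem in $\rln$). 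In particular $K$ is closed, so if $\nabla f(x)-\nabla f(y)\notin K$ then the separating hyperplane theorem furnishes $v\in\rln\setminus\{0\}$ with $\langle v,\nabla f(x)-\nabla f(y)\rangle>\langle v,w\rangle$ for all $w\in K$.

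Next I would introduce the scalar function $g:\rln\to\rl$, $g(z)=\langle v,\nabla f(z)\rangle$, which is locally Lipschitzian as a composition of a linear map with $\nabla f$, hence Lipschitz on a neighborhood of $[x,y]$. Applying the scalar Lebourg Mean-Value Theorem to $g$ on $[x,y]$ (cf.~\cite{FHC:83}) produces $z^{*}\in[x,y]$ and $\zeta\in\partial g(z^{*})$ with $g(x)-g(y)=\langle\zeta,x-y\rangle$. The crucial step is the chain-rule inclusion $\partial g(z^{*})\subset\{Hv\mid H\in\partial(\nabla f)(z^{*})\}$: at any differentiability point $z_{i}$ of $\nabla f$ one has $\nabla g(z_{i})=\nabla^{2}f(z_{i})v$, and one obtains the inclusion by passing to limits and convex hulls, using that $H\mapsto Hv$ is linear (hence commutes with $\co$) and that the generalized Hessians are symmetric. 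Thus $\zeta=Hv$ for some symmetric $H\in\partial(\nabla f)(z^{*})$, so that $H(x-y)\in K$ and
\[
\langle v,\nabla f(x)-\nabla f(y)\rangle=g(x)-g(y)=\langle\zeta,x-y\rangle=\langle Hv,x-y\rangle=\langle v,H(x-y)\rangle,
\]
which contradicts the strict separation. Hence $\nabla f(x)-\nabla f(y)\in K$, as asserted.

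I expect the main technical point to be the justification of the chain-rule inclusion $\partial(\langle v,\nabla f(\cdot)\rangle)(z)\subset\{Hv\mid H\in\partial(\nabla f)(z)\}$ directly from the limiting definitions of the generalized gradient and generalized Hessian --- in particular, verifying that the (measure-zero) set of nondifferentiability points of $\nabla f$ is an admissible choice for the negligible set $S$ appearing in the definition of $\partial g$ --- together with the verification that $K$ is closed, so that the separation argument is legitimate. The remaining steps are routine.
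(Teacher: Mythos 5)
The paper does not actually prove this proposition --- it is stated as ``a direct extension of Lebourg's Mean-Value Theorem'' with a citation to Rockafellar--Wets, so there is no in-paper argument to compare against. Your proof is the standard scalarization-plus-separation argument (essentially Clarke's Proposition~2.6.5 for vector-valued mean value theorems), and it is correct: $K$ is indeed compact (local boundedness and upper semi-continuity of $\partial(\nabla f)$ over the compact segment give compactness of $\bigcup_{z\in[x,y]}\partial(\nabla f)(z)$, and the convex hull of a compact set in $\rln$ is compact), so strict separation of $\nabla f(x)-\nabla f(y)$ from $K$ is legitimate, and the scalar Lebourg theorem applied to $g(z)=\langle v,\nabla f(z)\rangle$ delivers the contradiction. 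The two technical points you flag are exactly the right ones. On the chain-rule inclusion, the clean way to close it is: $g$ is differentiable wherever $\nabla f$ is, so $\Omega_g\subset\Omega_{\nabla f}$, and since the generalized gradient is independent of the auxiliary null set $S$, one may take $S=\Omega_{\nabla f}$ in the definition of $\partial g(z^{*})$; then every limit defining $\partial g(z^{*})$ has the form $\lim_i (D\nabla f(z_i))^{\top}v$ with $\nabla f$ differentiable at $z_i$, and the classical fact that $D(\nabla f)$ is symmetric at every point of differentiability of $\nabla f$ gives $\nabla g(z_i)=\nabla^{2}f(z_i)v$, whence $\partial g(z^{*})\subset\partial(\nabla f)(z^{*})\,v$ after taking convex hulls (the map $H\mapsto Hv$ being linear). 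That same symmetry is what justifies the final identity $\langle Hv,x-y\rangle=\langle v,H(x-y)\rangle$, and it is consistent with the paper's construction of $\partial(\nabla f)$ as a set of symmetric matrices. No gaps.
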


%%%%%%%%%%%%%%%%%%%%%%%%%%%%%%%%%%%%%%%%%%%%%%%%%%%%%%%%%%%%%%%%%%%%%%%%%%%%%%%%
\subsection{Set-Valued Dynamical Systems}\label{sec2:sub2}

Throughout the manuscript, we consider set-valued and locally
projected dynamical systems~\cite{JC:08-csm-yo,AN-DZ:96} defined by
differential inclusions~\cite{JPA-AC:84}. Let $X\subset\rln$ be open,
and let $F:X\rightrightarrows\rln$ be a set-valued map. Consider the
differential inclusion
\begin{equation}
  \renewcommand{\theequation}{DI}\tag{\theequation}\label{sec2:sub2:diffinclusion}
  \begin{cases}
    \dot{x}\in F(x) , % &\text{a.e. on $(0,t_{+})$},
    \\
    x(0)=x_{0}\in X.
  \end{cases}
\end{equation}
A solution of~\eqref{sec2:sub2:diffinclusion} on the interval
$[0,t_{+})\subset\rl$ (if any) is an absolutely continuous mapping
taking values in $X$, denoted by $x\in\AC([0,t_{+}),X)$, such that
$\dot{x}(t)\in F(x(t))$ for almost all $t\in[0,t_{+})$.  A point $x$
is an equilibrium of~\eqref{sec2:sub2:diffinclusion} if $0 \in
F(x)$. We denote by $\eq(F) $ the set of equilibria.  Given $x_{0}\in
X$, the existence of solutions of~\eqref{sec2:sub2:diffinclusion} with
initial condition $x_{0}\in X$ is guaranteed by the following
result~\cite{JC:08-csm-yo}.

\begin{lemma}[Existence of local solutions]\label{sec2:sub2:lm:exsol}
  Let the set-valued map $F:X\rightrightarrows\rln$ be locally
  bounded, upper semi-continuous with nonempty, convex and compact
  values. Then, given any $x_{0}\in X$, there exists a solution
  % $x\in\AC([0,t_{+}),X)$
  of~\eqref{sec2:sub2:diffinclusion} with
  initial condition $x_{0}$.
\end{lemma}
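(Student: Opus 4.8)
The plan is to obtain a solution as the uniform limit of Euler polygonal approximations, using local boundedness to trap the approximations in a compact set, the Arzel\`a--Ascoli theorem to extract a convergent subsequence, and the upper semi-continuity together with the convexity and compactness of the values of $F$ to pass to the limit in the inclusion; this is the classical argument of differential inclusion theory. First I would localize: since $F$ is locally bounded at $x_{0}$ and $X$ is open, pick $\delta>0$ and $M>0$ with $\overline{\BB}(x_{0},\delta)\subset X$ and $\lVert\xi\rVert\leq M$ for all $\xi\in F(y)$, $y\in\overline{\BB}(x_{0},\delta)$, and set $T=\delta/M$. For each $k\in\NN$ choose a partition $0=t_{0}^{k}<\dots<t_{N_{k}}^{k}=T$ with mesh tending to $0$ and define $x_{k}$ recursively by $x_{k}(0)=x_{0}$ and, on each $[t_{j}^{k},t_{j+1}^{k}]$, $\dot{x}_{k}(t)=\xi_{j}^{k}$ for some $\xi_{j}^{k}\in F(x_{k}(t_{j}^{k}))$ (nonempty by hypothesis). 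The bound $M$ guarantees inductively that $x_{k}$ stays in $\overline{\BB}(x_{0},\delta)\subset X$ on all of $[0,T]$, is $M$-Lipschitz, and is thus well defined.

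Next I would extract a limit. The family $\{x_{k}\}$ is uniformly bounded and equi-Lipschitz, hence equicontinuous, so by Arzel\`a--Ascoli a subsequence (not relabeled) converges uniformly on $[0,T]$ to an $M$-Lipschitz map $x$; then $x$ is absolutely continuous, $x(0)=x_{0}$, and $x$ takes values in $\overline{\BB}(x_{0},\delta)\subset X$, so $x\in\AC([0,T),X)$. Since the derivatives $\dot{x}_{k}$ are uniformly bounded in $L^{2}([0,T],\rln)$, along a further subsequence $\dot{x}_{k}\rightharpoonup\dot{x}$ weakly in $L^{2}$.

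Then I would show $\dot{x}(t)\in F(x(t))$ for almost every $t$. Fix $\varepsilon>0$ and a Lebesgue point $t\in(0,T)$ of $\dot{x}$. If $\tau_{k}\leq t$ denotes the largest mesh point below $t$, then $\lVert x_{k}(\tau_{k})-x(t)\rVert\leq M\lvert t-\tau_{k}\rvert+\lVert x_{k}-x\rVert_{\infty}\to0$, so by upper semi-continuity $F(x_{k}(\tau_{k}))\subset F(x(t))+\BB(0,\varepsilon)$ for $k$ large; as $F(x(t))$ is convex and compact, $F(x(t))+\overline{\BB}(0,\varepsilon)$ is closed and convex, and $\dot{x}_{k}(t)\in F(x(t))+\overline{\BB}(0,\varepsilon)$ for a.e.\ $t$ and all large $k$. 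By Mazur's lemma suitable convex combinations of the $\dot{x}_{k}$ converge strongly in $L^{1}$, hence a.e.\ along a subsequence, to $\dot{x}$; since the constraint set is closed and convex this gives $\dot{x}(t)\in F(x(t))+\overline{\BB}(0,\varepsilon)$, and letting $\varepsilon\downarrow0$ yields $\dot{x}(t)\in F(x(t))$ a.e. Hence $x$ solves~\eqref{sec2:sub2:diffinclusion}.

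The main obstacle is this last step: the approximations only satisfy $\dot{x}_{k}(t)\in F(x_{k}(\tau_{k}))$ at shifted base points, and one must ensure the weak (Mazur) limit of these selections lands back in $F(x(t))$. This is exactly where all three hypotheses on $F$ are indispensable---upper semi-continuity to transfer the inclusion from $x_{k}(\tau_{k})$ to $x(t)$ up to $\varepsilon$, and convexity together with compactness of the values so that a convex-combination/weak limit of admissible velocities cannot escape $F(x(t))$; without convexity the inclusion generically fails in the limit. Everything else (localization, Euler scheme, compactness extraction) is routine.
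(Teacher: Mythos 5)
Your proposal is correct: the paper does not prove this lemma but defers it to the cited literature, and the proof there is exactly the classical argument you give---Euler polygonal approximations confined by local boundedness, Arzel\`a--Ascoli extraction, and passage to the limit in the inclusion via upper semi-continuity, Mazur's lemma, and the convexity and compactness of the values. All hypotheses are used where they must be, and the one delicate point (transferring the inclusion from the shifted base points $x_k(\tau_k)$ back to $F(x(t))$) is correctly identified and handled.
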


Given a locally Lipschitzian function $V:X\to\rl$,
the~\emph{set-valued Lie derivative} $\LL_{F}V:X\rightrightarrows\rl$
of $V$ with respect to $F$ at $x\in X$ is defined by
\begin{equation*}
  \LL_{F}V(x)=\big\{\psi\in\rl\mid\exists\xi\in F(x):\langle\xi,\pi\rangle=
  \psi,\ \forall\pi\in\partial V(x)\big\}.
\end{equation*}
For each $x\in X$, $\LL_{F}V(x)$ is a closed and bounded interval in
$\rl$, possibly empty.

Let $G\subset\rln$ be a nonempty, closed and convex
set. The~\emph{tangent cone} and the~\emph{normal cone} of $G$ at
$x\in G$ are, respectively,
\begin{equation*}
  T_{G}(x)=\cl\rlnneg(G-x),\qquad N_{G}(x)=\{n\in\rln\mid\langle
  n,y-x\rangle\leq0,\ \forall y\in G\}. 
\end{equation*}
%
%\marginJC{Have we explained in prelims what the difference of sets
%  (e.g., $G-x$) is? How about the product of sets (e.g.,
%  $\rlplus(G-x)$)? Also, does $\rlplus$ mean $\realspositive$? I
%  prefer the second.}
%
Note that if $x\in\inter G$, then $T_{G}(x)=\rln$ and
$N_{G}(x)=\{0\}$. Let $\proj_{G}(x)=\argmin_{y\in G}\lVert
x-y\rVert$. The~\emph{orthogonal (set) projection}~of a nonempty,
convex and compact set $F(x)\subset\rln$ at $x\in G$ with respect to
$G\subset\rln$ is defined by
\begin{equation}\label{sec2:sub2:projop}
  P_{T_G(x)}(F(x))=\bigcup_{\xi\in F(x)}\lim_{\delta\searrow 0}
  \frac{\proj_{G}(x+\delta\xi)-x}{\delta}.
\end{equation}
Note that if $x\in\inter G$, then $P_{T_G(x)}(F(x))$ reduces to the
set $F(x)$. By definition, the orthogonal projection
$P_{T_G(x)}(F(x))$ is equivalent to the Euclidean projection of
$F(x)\subset\rln$ onto the tangent cone $T_{G}(x)$ at $x\in G$, i.e.,
$P_{T_G(x)}(F(x))=\proj_{T_{G}(x)}(F(x))$, cf.~\cite[Remark
1.1]{MP-MP:02}. Consider now the locally projected differential
inclusion
\begin{equation}
  \renewcommand{\theequation}{PDI}\tag{\theequation}\label{sec2:sub2:projinclusion}
  \begin{cases}
    \dot{x}\in P_{T_G(x)}(F(x)),% &\text{a.e. on
      % $(0,t_{+})$},
    \\
    x(0)=x_{0}\in G.
  \end{cases}
\end{equation}
Note that, in general, the set-valued map $x\mapsto P_{T_G(x)}(F(x))$
possesses no continuity properties and the values of
$P_{T_G(x)}(F(x))$ are not necessarily convex~\cite{JPA-AC:84}. Still,
the following result states conditions under which solutions
of~\eqref{sec2:sub2:projinclusion} exist~\cite{CH:73}.

\begin{lemma}[Existence of local solutions of projected differential
  inclusions]\label{sec2:sub2:lm:prexistence}
  Let $G\subset\rln$ be nonempty, closed and convex, and let the
  set-valued map $F:G\rightrightarrows\rln$ be locally bounded, upper
  semi-continuous with nonempty, convex and compact values. If there
  exists $c>0$ such that, for every $x\in G$,
  \begin{equation*}
    \sup_{\xi\in F(x)}\lVert\xi\rVert\leq c(1+\lVert x\rVert),
  \end{equation*}
  then, for any $x_{0}\in G$, there exists at least one solution
  $x\in\AC([0,t_{+}),G)$ of~\eqref{sec2:sub2:projinclusion} with
  initial condition $x_{0}$.
\end{lemma}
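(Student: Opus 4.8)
The plan is to construct a solution of~\eqref{sec2:sub2:projinclusion} as a limit, over a vanishing penalty index, of solutions of an unconstrained differential inclusion to which Lemma~\ref{sec2:sub2:lm:exsol} applies, exploiting the linear-growth hypothesis to keep the approximations uniformly bounded and equi-Lipschitz on compact time intervals, and then identifying the limiting dynamics. Concretely, for $k\in\NN$ consider the \emph{penalized} inclusion $\dot x\in F_{k}(x):=F(x)-k\big(x-\proj_{G}(x)\big)$ with $x(0)=x_{0}$. Since $\proj_{G}$ is globally Lipschitz, $F_{k}$ inherits from $F$ the property of being locally bounded and upper semi-continuous with nonempty, convex and compact values, and it has linear growth, $\sup_{\eta\in F_{k}(x)}\lVert\eta\rVert\le c(1+\lVert x\rVert)+k\big(\lVert x\rVert+\lVert\proj_{G}(0)\rVert\big)$. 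Thus Lemma~\ref{sec2:sub2:lm:exsol}, together with a Gr\"onwall estimate ruling out finite-time escape, produces a solution $x_{k}\in\AC([0,T],\rln)$ for every $T>0$.

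Next I would derive $k$-uniform a priori estimates on a fixed horizon $[0,T]$. Writing $\dot x_{k}=\xi_{k}-k(x_{k}-\proj_{G}(x_{k}))$ with a measurable selection $\xi_{k}(t)\in F(x_{k}(t))$ and using $x_{k}-\proj_{G}(x_{k})\in N_{G}(\proj_{G}(x_{k}))$, for any fixed $\bar z\in G$ one gets $\langle x_{k}-\bar z,x_{k}-\proj_{G}(x_{k})\rangle\ge\dist^{2}(x_{k},G)\ge0$, hence $\frac{d}{dt}\tfrac12\lVert x_{k}-\bar z\rVert^{2}\le\langle x_{k}-\bar z,\xi_{k}\rangle\le c\lVert x_{k}-\bar z\rVert\big(1+\lVert\bar z\rVert+\lVert x_{k}-\bar z\rVert\big)$; Gr\"onwall then gives a $k$-independent bound $\lVert x_{k}(t)\rVert\le M_{T}$ on $[0,T]$. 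Feeding this into the differential inequality $\dot w_{k}\le\sqrt{2w_{k}}\,c(1+M_{T})-2k\,w_{k}$ for $w_{k}(t):=\tfrac12\dist^{2}(x_{k}(t),G)$ (whose gradient in $x$ is $x-\proj_{G}(x)$), with $w_{k}(0)=0$, yields $\dist(x_{k}(t),G)\le C_{T}/k$ uniformly in $k$ and $t\in[0,T]$; consequently $\lVert k(x_{k}(t)-\proj_{G}(x_{k}(t)))\rVert\le C_{T}$ and $\lVert\dot x_{k}\rVert_{L^{\infty}([0,T])}\le c(1+M_{T})+C_{T}=:L_{T}$, so $\{x_{k}\}$ is uniformly bounded and $L_{T}$-Lipschitz on $[0,T]$.

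By the Arzel\`a--Ascoli theorem and a diagonal argument over $T\uparrow\infty$, a subsequence of $\{x_{k}\}$ converges uniformly on compact sets to some $x\in\AC([0,\infty),\rln)$, with $\dot x_{k}\rightharpoonup\dot x$ weakly in $L^{2}$ on each $[0,T]$; since $\dist(x_{k}(t),G)\to0$ and $G$ is closed, $x(t)\in G$ for all $t$, and $\dot x(t)\in T_{G}(x(t))$ for a.e.\ $t$ because $x(\cdot)$ is a curve in the convex set $G$. Writing $\dot x_{k}=\xi_{k}-n_{k}$ with $\xi_{k}(t)\in F(x_{k}(t))$ and $n_{k}(t)=k(x_{k}(t)-\proj_{G}(x_{k}(t)))\in N_{G}(\proj_{G}(x_{k}(t)))$, both bounded in $L^{\infty}([0,T])$, pass to further subsequences so that $\xi_{k}\rightharpoonup\xi$ and $n_{k}\rightharpoonup n$ weakly, whence $\dot x=\xi-n$. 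Using the upper semi-continuity and the convexity of the values of $F$ (so that, via Mazur's lemma, weak $L^{2}$-limits of selections of $F(x_{k}(\cdot))$ are selections of $F(x(\cdot))$), one obtains $\xi(t)\in F(x(t))$ a.e.; using the uniform bound on $n_{k}$, the convergence $\proj_{G}(x_{k})\to x$, and the closed graph of the normal-cone operator $N_{G}$, one obtains $n(t)\in N_{G}(x(t))$ a.e. Finally, to upgrade $\dot x(t)\in F(x(t))-N_{G}(x(t))$ to $\dot x(t)\in P_{T_{G}(x(t))}(F(x(t)))$ — equivalently, by Moreau's decomposition of $\xi(t)$ along the mutually polar cones $T_{G}(x(t))$ and $N_{G}(x(t))$, to the orthogonality $\langle n(t),\dot x(t)\rangle=0$ a.e.\ — the key ingredient is the energy identity $\int_{0}^{T}\langle n_{k}(t),\dot x_{k}(t)\rangle\,dt=k\big(w_{k}(T)-w_{k}(0)\big)=\tfrac k2\dist^{2}(x_{k}(T),G)\le C_{T}^{2}/(2k)\to0$ (recall $x_{0}\in G$), which, combined with the weak lower semi-continuity of $v\mapsto\int\lVert v\rVert^{2}$ and the pointwise sign $\langle n(t),\dot x(t)\rangle\le0$ (valid since $n(t)\in N_{G}(x(t))$ and $\dot x(t)\in T_{G}(x(t))$), forces $\int_{0}^{T}\langle n(t),\dot x(t)\rangle\,dt=0$ with a nonpositive integrand, hence $\langle n(t),\dot x(t)\rangle=0$ a.e. Then $\dot x(t)=\proj_{T_{G}(x(t))}(\xi(t))\in P_{T_{G}(x(t))}(F(x(t)))$ a.e., and $x$ is the sought solution.

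The main obstacle is precisely this last passage to the limit: transferring the inclusion $\dot x_{k}\in F_{k}(x_{k})$ to the limit under only weak convergence of the velocities. Placing $\xi(t)$ back into $F(x(t))$ genuinely relies on the convexity of the values of $F$ and its upper semi-continuity (the classical convergence theorem for differential inclusions), and certifying the orthogonality relation — i.e.\ that the limiting right-hand side is the \emph{projected} field and not merely $F(x)-N_{G}(x)$ restricted to $T_{G}(x)$ — is where the bookkeeping with the energy identity above has to be carried out with care. By contrast, the a priori estimates are routine, but they are exactly where the linear-growth assumption is indispensable: it simultaneously rules out finite-time blow-up of the $x_{k}$ and keeps the penalty terms $n_{k}$ bounded.
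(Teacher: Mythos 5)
The paper does not actually prove this lemma: it is quoted verbatim from the literature on locally projected dynamical systems, with the citation \cite{CH:73} standing in for the proof. So you are supplying an argument where the authors supply a reference. Your penalty-approximation route ($\dot x\in F(x)-k(x-\proj_{G}(x))$, uniform bounds, Arzel\`a--Ascoli, weak compactness of the velocities, convergence theorem for differential inclusions) is one of the classical ways to prove such results, and the a priori estimates you derive (the $\dist(x_k,G)\le C_T/k$ bound and the resulting $L^\infty$ bound on the penalty term $n_k$) are correct and are exactly where the linear-growth hypothesis is used, as you say. Two points, however, need repair.

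First, a small but real one: $F$ is only defined on $G$, while the penalized trajectories $x_k$ necessarily leave $G$, so the field $F_k(x)=F(x)-k(x-\proj_G(x))$ is not defined where you need it. You must first extend $F$ to $\rln$, e.g.\ by $\tilde F(x)=F(\proj_G(x))$; this composition with the ($1$-Lipschitz) projection preserves upper semi-continuity, local boundedness, nonempty convex compact values, and linear growth, and all your estimates go through with $\xi_k(t)\in F(\proj_G(x_k(t)))$ in place of $\xi_k(t)\in F(x_k(t))$. Second, and more substantively: the final orthogonality step does not close as written. From $\int_0^T\langle n_k,\dot x_k\rangle\,dt\to 0$ you cannot deduce $\int_0^T\langle n,\dot x\rangle\,dt=0$ by ``weak lower semi-continuity of $v\mapsto\int\lVert v\rVert^2$'': the pairing $\int\langle n_k,\dot x_k\rangle$ is a product of two sequences that converge only weakly in $L^2$, and such products are neither weakly continuous nor one-sidedly semicontinuous in the direction you need (writing $\langle n_k,\dot x_k\rangle=\langle n_k,\xi_k\rangle-\lVert n_k\rVert^2$ leaves you with the cross term $\int\langle n_k,\xi_k\rangle$, which has the same defect). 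Fortunately the conclusion you are after is available by a much simpler route that makes the energy identity unnecessary: since $x(\cdot)$ is absolutely continuous with values in the convex set $G$, at almost every $t$ the derivative is two-sided, so both $\dot x(t)=\lim_{h\downarrow0}(x(t+h)-x(t))/h$ and $-\dot x(t)=\lim_{h\downarrow0}(x(t-h)-x(t))/h$ lie in $T_G(x(t))$; once you have shown $n(t)\in N_G(x(t))$ a.e.\ (which your strong-times-weak limit argument for the normal cone does give, e.g.\ by testing $\langle n_k(t),y-\proj_G(x_k(t))\rangle\le0$ against nonnegative $\varphi\in L^2$ and a countable dense set of $y\in G$), polarity yields $\langle n(t),\pm\dot x(t)\rangle\le0$, hence $\langle n(t),\dot x(t)\rangle=0$ a.e., and Moreau's decomposition then gives $\dot x(t)=\proj_{T_G(x(t))}(\xi(t))\in P_{T_G(x(t))}(F(x(t)))$ as desired. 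With these two corrections your proof is complete and self-contained, which is more than the paper offers.
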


\myclearpage
%% CONVEX OPTIMIZATION VIA SADDLE-POINT DYNAMICS %%%%%%%%%%%%%%%%%%%%%%%%%%%%%%%
\section{Convex Optimization via Saddle-Point Dynamics}\label{sec3}

Consider the constrained minimization problem
\begin{equation}
  \renewcommand{\theequation}{P}\tag{\theequation}\label{sec3:probstatement}
  \min\{f(x)\mid h(x)=0_{p},\ g(x)\leq0_{m}\},
\end{equation}
where $f:\rln\to\rl$ and $g:\rln\to\rlm$ are convex, and
$h:\rln\to\rlp$ is affine, i.e., $h(x)=Ax-b$, with $A\in\rlpn$ and
$b\in\rlp$, where $p\leq n$. Let $C=\{x\in\rln\mid h(x)=0_{p},\
g(x)\leq0_{m}\}$ denote the constraint set and assume that the (closed
and convex) set of solutions $S=\{x^{\star}\in C\mid
f(x^{\star})=\inf_{C}f\}$ of~\eqref{sec3:probstatement} is nonempty
and bounded. Throughout the paper we assume that the constraint set
$C\subset\rln$ satisfies the strong Slater
assumptions~\cite{JBHU-CL:93}, i.e.,
\begin{enumerate}[\hspace{10pt}({A}1)]
\item $\rank(A)=p$, i.e., the rows of $A\in\rlpn$ are linearly
  independent;
\item $\exists x\in\rln$ such that $h(x)=0_{p}$ and $g_{k}(x)<0$ for
  all $k\in\until{m}$.
\end{enumerate}

Our main objective is to design continuous-time algorithms with
performance guarantees to find the solution of the nonsmooth convex
program~\eqref{sec3:probstatement}. We are specifically interested in
solvers that are amenable to distributed implementation by a group of
agents, permitting each one of them to find their component of the
solution vector.  The algorithms proposed in this work build on
concepts of Lagrangian duality theory and characterize the primal-dual
solutions of~\eqref{sec3:probstatement}~as saddle points of an
augmented Lagrangian. More precisely, let $\kappa,\mu>0$ and let the
augmented Lagrangian $L:\rln\times\rlp\to\rl$ associated
with~\eqref{sec3:probstatement} be defined by
\begin{equation*}
  L(x,\lambda)=f(x)+\frac{1}{2\mu}\lVert h(x)\rVert^{2}+\langle\lambda,h(x)
  \rangle+\kappa\langle\om,[g(x)]^{+}\rangle,
\end{equation*}
where $\lambda\in\rlp$ is a Lagrange multiplier. Under the regularity
assumptions (A1)--(A2), for every $x^{\star}\in S$, there exists
$(\lambda^{\star},\nu^{\star})\in\rlp\times\rlm$ such that
$(x^{\star},\lambda^{\star})$ is a~\emph{saddle point} of the
augmented Lagrangian $L$, i.e.,
\begin{equation*}
  L(x^{\star},\lambda)\leq L(x^{\star},\lambda^{\star})\leq
  L(x,\lambda^{\star}),\quad\forall x\in\rln,\ \forall\lambda\in\rlp,
\end{equation*}
provided that $\kappa\geq\lVert\nu^{\star}\rVert_{\infty}$. We denote
the (closed and convex) set of saddle points of $L$ by $\sap(L)$. The
following statement reveals that the converse result is also true. Its
proof can be deduced from results on penalty functions in the
optimization literature, see e.g.~\cite{DPB:75b}, and we omit it for
reasons of space.

\begin{lemma}[Saddle Points and Solutions of the Optimization
  Problem]\label{sec3:lm:spthm}
  Let $L:\rln\times\rlp\to\rl$ and let
  $(x^{\star},\lambda^{\star})\in\sap(L)$ with
  $\kappa>\lVert\nu^{\star}\rVert_{\infty}$ for some dual solution
  $\nu^{\star}$ of~\eqref{sec3:probstatement}. Then, $x^{\star}\in S$,
  i.e., $x^{\star}$ is a (primal) solution
  of~\eqref{sec3:probstatement}.
\end{lemma}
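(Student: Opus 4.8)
The plan is to use the two saddle-point inequalities separately and then close the argument with a strong-duality lower bound that activates the $\ell_1$-exact-penalty mechanism through the strict inequality $\kappa>\lVert\nu^{\star}\rVert_{\infty}$. Write $f^{\star}=\inf_{C}f$, which is finite because $S\neq\emptyset$. First I would extract equality-feasibility of $x^{\star}$: since $(x^{\star},\lambda^{\star})\in\sap(L)$, the inequality $L(x^{\star},\lambda)\leq L(x^{\star},\lambda^{\star})$ holds for all $\lambda\in\rlp$, and $\lambda\mapsto L(x^{\star},\lambda)$ is affine with linear part $\langle\lambda,h(x^{\star})\rangle$. Such a map can attain its maximum over all of $\rlp$ only if its linear part vanishes, so $h(x^{\star})=0_{p}$ (otherwise $\lambda=\lambda^{\star}+t\,h(x^{\star})$ with $t\to+\infty$ would make $L(x^{\star},\lambda)$ diverge).

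Next I would use the complementary inequality $L(x^{\star},\lambda^{\star})\leq L(x,\lambda^{\star})$ evaluated at feasible points only. For $x\in C$ one has $h(x)=0_{p}$ and $[g(x)]^{+}=0_{m}$, hence $L(x,\lambda^{\star})=f(x)$; on the other hand $h(x^{\star})=0_{p}$ gives $L(x^{\star},\lambda^{\star})=f(x^{\star})+\kappa\langle\om,[g(x^{\star})]^{+}\rangle$. Taking the infimum over $x\in C$ therefore yields
\begin{equation*}
  f(x^{\star})+\kappa\langle\om,[g(x^{\star})]^{+}\rangle\;\leq\;f^{\star}.
\end{equation*}
Since the penalty term is nonnegative, this already gives $f(x^{\star})\leq f^{\star}$; it remains to prove $g(x^{\star})\leq0_{m}$, for then $x^{\star}\in C$ and $f(x^{\star})\geq\inf_{C}f=f^{\star}$, so $f(x^{\star})=f^{\star}$ and $x^{\star}\in S$.

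To obtain inequality-feasibility I would invoke strong duality. Under (A1)--(A2), Slater's condition holds, so the dual of~\eqref{sec3:probstatement} is solvable with value $f^{\star}$; by assumption we may take a dual solution $\nu^{\star}\geq0_{m}$ with $\kappa>\lVert\nu^{\star}\rVert_{\infty}$, together with a corresponding $\bar\lambda\in\rlp$, so that $f(x)+\langle\bar\lambda,h(x)\rangle+\langle\nu^{\star},g(x)\rangle\geq f^{\star}$ for all $x\in\rln$. Evaluating this bound at $x=x^{\star}$ and using $h(x^{\star})=0_{p}$ gives $f^{\star}\leq f(x^{\star})+\langle\nu^{\star},g(x^{\star})\rangle$. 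Since $\nu^{\star}\geq0_{m}$, the componentwise estimates $\nu^{\star}_{k}g_{k}(x^{\star})\leq\nu^{\star}_{k}[g_{k}(x^{\star})]^{+}\leq\lVert\nu^{\star}\rVert_{\infty}[g_{k}(x^{\star})]^{+}$ sum up to $\langle\nu^{\star},g(x^{\star})\rangle\leq\lVert\nu^{\star}\rVert_{\infty}\langle\om,[g(x^{\star})]^{+}\rangle$, whence $f^{\star}\leq f(x^{\star})+\lVert\nu^{\star}\rVert_{\infty}\langle\om,[g(x^{\star})]^{+}\rangle$. Adding this to the displayed inequality above cancels the $f(x^{\star})-f^{\star}$ terms and leaves $(\kappa-\lVert\nu^{\star}\rVert_{\infty})\langle\om,[g(x^{\star})]^{+}\rangle\leq0$; as $\kappa>\lVert\nu^{\star}\rVert_{\infty}$ and $\langle\om,[g(x^{\star})]^{+}\rangle\geq0$, this forces $\langle\om,[g(x^{\star})]^{+}\rangle=0$, i.e.\ $g(x^{\star})\leq0_{m}$. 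Together with $h(x^{\star})=0_{p}$ and $f(x^{\star})=f^{\star}$ from the previous step, this gives $x^{\star}\in S$.

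The only real obstacle is conceptual: one has to notice that the \emph{strict} inequality $\kappa>\lVert\nu^{\star}\rVert_{\infty}$ is precisely what turns the two one-sided bounds on $f(x^{\star})+\kappa\langle\om,[g(x^{\star})]^{+}\rangle$ into a forced vanishing of the penalty term --- this is the $\ell_{1}$-exact-penalty phenomenon --- and one must make sure the strong-duality lower bound is available and compatible with the given dual solution. The affineness argument yielding $h(x^{\star})=0_{p}$ and the $\ell_{\infty}$/$\ell_{1}$ duality estimate for $\langle\nu^{\star},g(x^{\star})\rangle$ are routine.
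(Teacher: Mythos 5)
The paper does not actually supply a proof of this lemma---it is stated with the remark that the argument ``can be deduced from results on penalty functions in the optimization literature'' (citing Bertsekas' work on exact penalty methods) and is omitted for space. Your proof is correct and is, in substance, exactly the standard exact-penalty argument the authors are alluding to: the first saddle inequality forces $h(x^{\star})=0_{p}$ because an affine function of $\lambda$ with nonzero linear part is unbounded above on $\rlp$; the second saddle inequality restricted to $C$ gives $f(x^{\star})+\kappa\langle\om,[g(x^{\star})]^{+}\rangle\leq\inf_{C}f$; and the strong-duality lower bound together with $\langle\nu^{\star},g(x^{\star})\rangle\leq\lVert\nu^{\star}\rVert_{\infty}\langle\om,[g(x^{\star})]^{+}\rangle$ squeezes out $(\kappa-\lVert\nu^{\star}\rVert_{\infty})\langle\om,[g(x^{\star})]^{+}\rangle\leq0$, so the strict inequality on $\kappa$ kills the penalty term and yields feasibility and optimality of $x^{\star}$. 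All steps check out, including the availability of a dual pair $(\bar\lambda,\nu^{\star})$ with zero duality gap under (A1)--(A2); your write-up is a complete, self-contained proof of the omitted result.
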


Lemma~\ref{sec3:lm:spthm} identifies a condition under which the
penalty parameter $\kappa$ is~\emph{exact}~\cite{DPB:75b,OLM:85}. This
condition in turn depends on the dual solution set.  Given this
result, instead of directly solving~\eqref{sec3:probstatement}, we
seek to design strategies that find saddle points of $L$. Since the
bivariate augmented Lagrangian $L$ is, by definition, convex-concave,
a natural approach to find the saddle points is via its associated
saddle-point dynamics
\begin{equation}
  \renewcommand{\theequation}{SPD}\tag{\theequation}\label{sec3:spdynamics}
  \begin{cases}
    (\dot{x},\dot{\lambda})\in -(\partial_{x}L,-\partial_{\lambda}L)
    (x,\lambda) ,
    % (\dot{x}(t),\dot{\lambda}(t))\ +(\partial_{x}L,-\partial_{\lambda}L)
    % (x(t),\lambda(t))\ni(0_{n},0_{p}),  &\text{a.e. on $(0,t_{+})$},
    \\
    (x(0),\lambda(0))=(x_{0},\lambda_{0})\in\rln\times\rlp.
  \end{cases}
\end{equation}
Let $(\partial_{x}L,-\partial_{\lambda}L):
\rln\times\rlp\rightrightarrows\rln\times\rlp$ denote
the~\emph{saddle-point operator} associated with $L$. Since $L$ is
convex-concave and locally Lipschitzian~\cite[Theorem 35.1]{RTR:70},
the mapping
$(x,\lambda)\mapsto(\partial_{x}L,-\partial_{\lambda}L)(x,\lambda)$ is
locally bounded, upper semi-continuous and takes nonempty, convex and
compact values, cf. Proposition~\ref{sec2:sub1:pr:genprop}. The
existence of local solutions $(x,\lambda)\in\AC([0,t_{+}),\rln\times\rlp)$
of~\eqref{sec3:spdynamics} is guaranteed by
Lemma~\ref{sec2:sub2:lm:exsol}. Moreover, we have that
\begin{equation*}
  \sap(L)=(\partial_{x}L,-\partial_{\lambda}L)^{-1}(0_{n},0_{p}) = \eq(\partial_{x}L,-\partial_{\lambda}L).
\end{equation*}
Consequently, our strategy to solve~\eqref{sec3:probstatement} amounts
to the issue of ``finding the zeros'' of the saddle-point operator
via~\eqref{sec3:spdynamics}.

\begin{remark}[Existence and uniqueness of solutions]
  {\rm In fact, one can show that the saddle-point operator
    $(\partial_{x}L,-\partial_{\lambda}L):\rln\times\rlp\rightrightarrows\rln\times\rlp$
    is maximal monotone, and thus, the existence and uniqueness of a
    global solution of~\eqref{sec3:spdynamics} % (which is the slow
    % solution)
    follows from~\cite[Theorem 1, p. 147]{JPA-AC:84}.}\oprocend
\end{remark}

%%%%%%%%%%%%%%%%%%%%%%%%%%%%%%%%%%%%%%%%%%%%%%%%%%%%%%%%%%%%%%%%%%%%%%%%%%%%%%%%
\subsection{Convergence Analysis}\label{sec3:sub1}

By construction of the saddle-point dynamics~\eqref{sec3:spdynamics},
it is natural to expect that its trajectories converge towards the set
of saddle points of the augmented Lagrangian $L$ as time evolves. Our
proof strategy to establish this convergence result relies on
Lyapunov's direct method.

\begin{theorem}[Asymptotic convergence]\label{sec3:sub1:th:lyapunov}
  Let $L:\rln\times\rlp\to\rl$ with $\mu\in(0,1)$ and
  $\kappa>\lVert\nu^{\star}\rVert_{\infty}$ for some dual solution
  $\nu^{\star}$ of~\eqref{sec3:probstatement}. Then, the set $\sap(L)$
  is strongly globally asymptotically stable
  under~\eqref{sec3:spdynamics}.
\end{theorem}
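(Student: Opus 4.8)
The plan is to apply Lyapunov's direct method for set-valued dynamical systems, which requires (a) constructing a locally Lipschitzian, positive semidefinite function $V$ that vanishes precisely on $\sap(L)$, is radially unbounded, and (b) showing $\max \LL_{(\partial_x L,-\partial_\lambda L)}(-1)V(x,\lambda)\le 0$ along the dynamics, together with (c) an invariance/uniqueness argument to upgrade the LaSalle-type conclusion to actual convergence of every trajectory to $\sap(L)$, hence strong global asymptotic stability. The announced novelty of the paper is the choice of $V$, so the heart of the argument is to guess the right candidate. A natural first attempt is the quadratic $V_0(x,\lambda) = \tfrac12\dist((x,\lambda),\sap(L))^2$ or, picking any fixed saddle point $(x^\star,\lambda^\star)$, $V_1(x,\lambda)=\tfrac12\lVert x-x^\star\rVert^2 + \tfrac12\lVert\lambda-\lambda^\star\rVert^2$; by the saddle-point inequality and (strict) monotonicity of $\partial_x L(\cdot,\lambda^\star)$ and $-\partial_\lambda L(x^\star,\cdot)$, one gets $\max\LL_F(-1)V_1 \le \langle x-x^\star,\xi_x\rangle+\langle\lambda-\lambda^\star,\xi_\lambda\rangle$ where this bracket is $\le L(x^\star,\lambda)-L(x,\lambda^\star)\le 0$ by convex-concavity. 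The trouble is that $V_1$ alone is only non-increasing, not strictly decreasing off $\sap(L)$, because the saddle-point dynamics need not be monotone in a way that makes the dissipation term strict (the $\ell_1$-penalty kills strict convexity and the affine constraint contributes no strict term). So I expect the actual Lyapunov function to be an \emph{augmentation} of $V_1$ by a cross term or a term built from the saddle-point operator itself, something like $V(x,\lambda)=V_1(x,\lambda) + \beta\,\lVert (\xi_x,\xi_\lambda)\rVert$-type or $\dist(0,F(x,\lambda))$-type contribution, or an integral/primal-gap term $f(x)+\tfrac{1}{2\mu}\lVert h(x)\rVert^2+\kappa\langle\om,[g(x)]^+\rangle - f^\star$ added to $V_1$; the condition $\mu\in(0,1)$ strongly suggests the quadratic regularization $\tfrac{1}{2\mu}\lVert h(x)\rVert^2$ is being played against $\tfrac12\lVert x-x^\star\rVert^2$ with a sign that only works for $\mu<1$.

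Concretely, the steps I would carry out are as follows. \textbf{Step 1: well-posedness.} Invoke the remarks after~\eqref{sec3:spdynamics}: $F:=(\partial_x L,-\partial_\lambda L)$ is locally bounded, u.s.c. with nonempty convex compact values (Proposition~\ref{sec2:sub1:pr:genprop}), so Lemma~\ref{sec2:sub2:lm:exsol} gives local solutions; maximal monotonicity gives a unique global solution, and $\eq(F)=\sap(L)$ which is nonempty (by the saddle-point characterization) and closed convex. \textbf{Step 2: Lyapunov candidate.} Write down $V$ (the paper's novel function), verify it is locally Lipschitzian, nonnegative, radially unbounded (coercive w.r.t. $\sap(L)$), and $V^{-1}(0)=\sap(L)$; here boundedness of $S$ and assumptions (A1)--(A2) are needed so that $\sap(L)$ is bounded and the coercivity holds. \textbf{Step 3: decrease condition.} Compute the set-valued Lie derivative $\LL_F(-1)V(x,\lambda)$, i.e., pick any $(\xi_x,\xi_\lambda)\in F(x,\lambda)$, any $\pi\in\partial V(x,\lambda)$, and bound $-\langle(\xi_x,\xi_\lambda),\pi\rangle$. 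Split the estimate into the $V_1$-part (controlled via the saddle-point inequality $L(x^\star,\lambda)\le L(x^\star,\lambda^\star)\le L(x,\lambda^\star)$ and convex-concavity of $L$) and the extra part (where $\mu<1$ enters to make the sum strictly negative whenever $(x,\lambda)\notin\sap(L)$). The goal is $\sup\LL_F(-1)V(x,\lambda) < 0$ for all $(x,\lambda)\notin\sap(L)$ and $=0$ on $\sap(L)$. \textbf{Step 4: conclude.} Since $V$ is radially unbounded and strictly decreasing off the target set along all solutions, every sublevel set is forward invariant and trajectories are bounded; by the nonsmooth Lyapunov theorem / LaSalle for differential inclusions, $(x(t),\lambda(t))\to\sap(L)$. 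Strong asymptotic stability (as opposed to weak) follows because ALL solutions converge—but uniqueness from maximal monotonicity makes "strong" and "weak" coincide anyway; stability in the sense of Lyapunov is immediate from invariance of $\{V\le c\}$ and $V^{-1}(0)=\sap(L)$.

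The main obstacle, and where the real work lies, is \textbf{Step 3}—more precisely, identifying $V$ so that the dissipation inequality is \emph{strict} off $\sap(L)$ without any strict convexity of $f$ or strict monotonicity of the constraint maps. With $V_1$ alone one only gets $\max\LL_F(-1)V_1\le 0$, and the set where equality holds can be strictly larger than $\sap(L)$ (e.g., the bracket $L(x^\star,\lambda)-L(x,\lambda^\star)$ can vanish on a whole face), so one cannot avoid either (i) a LaSalle invariance argument—which the authors explicitly want to \emph{avoid}—or (ii) a cleverer $V$. Thus the crux is to verify that the augmented $V$ yields strict decrease; I anticipate this needs a careful case analysis on the active set of the inequality constraints $g$ (because $[g(x)]^+$ is nonsmooth and contributes set-valued subgradients), and it is exactly here that the hypothesis $\kappa>\lVert\nu^\star\rVert_\infty$ is used to guarantee that feasibility/optimality of the penalized problem is "rigid" enough to pin down $x$ on $S$, and $\mu\in(0,1)$ is used to absorb the quadratic-penalty contribution against the quadratic term in $V_1$. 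A secondary technical nuisance is that $\LL_F(-1)V$ may be empty at some points (the set-valued Lie derivative is a possibly-empty interval), so one must either show it is nonempty along solutions or use the formulation of the nonsmooth Lyapunov theorem that only tests against $\max\LL_F$ with the convention $\max\emptyset = -\infty$, which is harmless for the decrease conclusion.
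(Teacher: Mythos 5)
Your plan has the right architecture (strict Lyapunov function, coercivity, strict decrease off $\sap(L)$, no LaSalle), but it stops exactly at the step you yourself identify as the heart of the matter: you never commit to, let alone verify, a specific Lyapunov function, and none of your candidates is quite the right one. The paper's function is
$V(x,\lambda)=f(x)-\inf_{C}f+\tfrac{1}{2\mu}\lVert h(x)\rVert^{2}+\langle\lambda,h(x)\rangle+\kappa\langle\om,[g(x)]^{+}\rangle+\tfrac{1}{2}\dist((x,\lambda),\sap(L))^{2}$,
i.e.\ essentially $L(x,\lambda)-L(x^{\star},\lambda^{\star})$ plus the squared distance to the saddle set. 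Your closest guess (``primal-gap term added to $V_1$'') omits the bilinear term $\langle\lambda,h(x)\rangle$, and that term is load-bearing twice over. First, positivity of $V$ off $\sap(L)$ is \emph{not} obvious (the Lagrangian gap $L(x,\lambda)-L(x^\star,\lambda^\star)$ can be negative); it is obtained by lower-bounding $V$ by the quadratic form $\tfrac12\langle P(x-x^{\star},\lambda-\lambda^{\star}),(x-x^{\star},\lambda-\lambda^{\star})\rangle$ with $P=\begin{pmatrix}\tfrac{1}{\mu}A^{\top}A+I_{n}&A^{\top}\\ A&I_{p}\end{pmatrix}$, whose positive definiteness (Schur complement of $I_p$) needs both the cross term and $\mu\le1$. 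Second, in the Lie derivative the contribution $-\langle\pi_x,x-x^{\star}\rangle$ produces $-\tfrac{1}{\mu}\lVert h(x)\rVert^{2}-\langle\lambda-\lambda^{\star},h(x)\rangle$ (after discarding the monotonicity terms of $\partial f$ and $\partial[g]^{+}$), and the $\langle\lambda-\lambda^{\star},h(x)\rangle$ piece cancels exactly against the one coming from the ascent in $\lambda$, leaving $\psi\le-\lVert\pi_{x}\rVert^{2}-(\tfrac{1}{\mu}-1)\lVert h(x)\rVert^{2}$, which is strictly negative off $\sap(L)$ precisely because $\sap(L)=(\partial_{x}L,-\partial_{\lambda}L)^{-1}(0_n,0_p)$ and $\mu<1$. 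Without the cross term this cancellation does not occur and neither half of the argument closes.

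Two smaller corrections to your anticipated difficulties. The strict decrease requires no case analysis on the active set of $g$: the convexity of $\langle\om,[g]^{+}\rangle$ is used only through monotonicity of its subdifferential (a sign that is simply dropped), and the hypothesis $\kappa>\lVert\nu^{\star}\rVert_{\infty}$ enters only to guarantee that $\sap(L)$ is nonempty and compact (so that the projection in $\dist(\cdot,\sap(L))^2$ is well defined and $V$ is coercive), not to ``pin down $x$ on $S$'' inside the dissipation estimate. And your Step 4 should not fall back on LaSalle: once $\LL_{\rm SPD}V(x,\lambda)\subset(-\infty,0)$ off $\sap(L)$ is established, the conclusion follows from the direct method, which is the whole point of the construction.
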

\begin{proof}
  We start by observing that the set of saddle points $\sap(L)$ is
  nonempty, convex and compact given that
  $\kappa>\lVert\nu^{\star}\rVert_{\infty}$, the solution set
  $S\subset\rln$ of~\eqref{sec3:probstatement} is nonempty and
  bounded, and that the strong Slater assumptions hold,
  cf.~\cite[Theorem 2.3.2]{JBHU-CL:93}.  Consider the Lyapunov
  function candidate $V:\rln\times\rlp\to\rl$ defined by
  \begin{equation}\begin{split}\label{sec3:sub1:lyapunov}
      V(x,\lambda)=&~f(x)-\inf\nolimits_{C}f+\frac{1}{2\mu}\lVert
      h(x)\rVert^{2}
      +\langle\lambda,h(x)\rangle+\kappa\langle\om,[g(x)]^{+}\rangle
      \\
      &+\frac{1}{2}\dist((x,\lambda),\sap(L))^{2}.
    \end{split}\end{equation}
  
  We start by showing that $V(x,\lambda)>0$ for all
  $(x,\lambda)\notin\sap(L)$ and $V(x,\lambda)=0$ if
  $(x,\lambda)\in\sap(L)$. Let $(x,\lambda)\in\rln\times\rlp$ and let
  \begin{align}\label{sec3:sub1:mindist}
    (x^{\star}(x,\lambda),\lambda^{\star}(x,\lambda))=\argmin_{(\tilde{x},
      \tilde{\lambda})\in\sap(L)}\Big(\frac{1}{2}\lVert x-\tilde{x}\rVert^{2}+
    \frac{1}{2}\lVert\lambda-\tilde{\lambda}\rVert^{2}\Big).
  \end{align}
  For notional convenience, we drop the dependency of
  $(x^{\star},\lambda^{\star})$ on the argument $(x,\lambda)$. Note
  that the (unique) minimizer $(x^{\star},\lambda^{\star})$
  of~\eqref{sec3:sub1:mindist} exists since the set $\sap(L)$ is
  nonempty, %(provided that $\kappa\geq\lVert\nu^{\star}\rVert_{\infty}$),
  convex and compact. By convexity of $f$, $h$ and
  $\langle\om,[g]^{+}\rangle$,
  \marginJC{The wording ``by convexity of...'' hides a lot of the
  mathematical derivations that need to be done to get the
  inequality. This makes it much harder for the reader to follow our
  the reasoning, worsening the overall experience. I'd suggest to
  elaborate on this, here and later at some other points, along the
  lines we had before.}
\margins{I would prefer to tell the reader the ``main ingredients''
  used rather than spamming him/her with technicalities such as
  rearranging terms.}
  \begin{align*}
    V(x,\lambda)\geq&~\frac{1}{2\mu}\lVert h(x)\rVert^{2}+\langle\lambda-
    \lambda^{\star},h(x)\rangle+\frac{1}{2}\lVert x-x^{\star}\rVert^{2}+
    \frac{1}{2}\lVert\lambda-\lambda^{\star}\rVert^{2}\\
    =&~\frac{1}{2}\big\langle P(x-x^{\star},\lambda-\lambda^{\star}),
    (x-x^{\star},\lambda-\lambda^{\star})\big\rangle
  \end{align*}
  where the symmetric matrix
  \begin{align}\label{sec3:matrixp}
    P=\begin{pmatrix}
      \dfrac{1}{\mu} A^{\top}A+I_{n}&A^{\top}\\A&I_{p}
    \end{pmatrix}\in\rl^{(n+p)\times(n+p)}
  \end{align}
  is positive definite for $\mu\in(0,1]$ (this follows by observing
  that $I_p \succ 0$ and the Schur complement~\cite{RAH-CRJ:85}~of
  $I_p$ in $P$, denoted $P/I_p = \frac{1}{\mu}A^\top A + I_n - A^\top
  I_p^{-1} A$, is positive definite).  Since
  $(x,\lambda)\in\rln\times\rlp$ is arbitrary, it follows
  \begin{align*}
    V(x,\lambda)\geq\frac{1}{2}\lambda_{\min}(P)\dist((x,\lambda),\sap(L))^{2}.
  \end{align*}
  Thus, we have $V(x,\lambda)>0$ for all $(x,\lambda)\notin\sap(L)$
  and $V(x,\lambda)=0$ if $(x,\lambda)\in\sap(L)$. Moreover, $V$ is
  coercive with respect to $\sap(L)$, that is,
  $V(x,\lambda)\to+\infty$ whenever
  $\dist((x,\lambda),\sap(L))\to+\infty$.

  We continue by studying the evolution of $V$ along the solutions of
  the saddle-point dynamics~\eqref{sec3:spdynamics}. Let
  $(x,\lambda)\in\rln\times\rlp$ and let $(x^{\star},\lambda^{\star})$
  be defined by~\eqref{sec3:sub1:mindist}. Take $\psi\in\LL_{\rm
    SPD}V(x,\lambda)$. By definition of the set-valued Lie derivative,
  there exists
  $(\pi_{x},\pi_{\lambda})\in(\partial_{x}L,-\partial_{\lambda}L)(x,\lambda)$
  such that
  $\psi=-\langle(\xi_{x},\xi_{\lambda}),(\pi_{x},\pi_{\lambda})\rangle$
  for all
  $(\xi_{x},\xi_{\lambda})\in(\partial_{x}V,\partial_{\lambda}V)(x,\lambda)$. From
  this and the convexity of $f$ and $\langle\om,[g]^{+}\rangle$, we
  obtain
  % 
  % \marginJC{Again, some problem about being a bit too cryptic.}
  % 
  \begin{equation}\begin{split}\label{sec3:sub1:psi}
      \psi\leq&-\Big\lVert\frac{1}{\mu}A^{\top}h(x)+A^{\top}\lambda+\xi_f+
      \kappa~\sum_{\mathclap{k\in\until{m}}}~\xi_{g_{k}}^{+}\Big\rVert^{2}-
      \Big(\frac{1}{\mu}-1\Big)\lVert-h(x)\rVert^{2}\\
      \leq&-\min\Big\{1,\frac{1}{\mu}-1\Big\}\Big\lVert\Big(\frac{1}{\mu}A^{\top}
      h(x)+A^{\top}\lambda+\xi_f +
      \kappa~\sum_{\mathclap{k\in\until{m}}}~\xi_{g_{k}}^{+},-h(x)\Big)
      \Big\rVert^{2},
    \end{split}
  \end{equation}
  where $\xi_{f}\in\partial f(x)$ and
  $\xi_{g_{k}}^{+}\in\partial[g_{k}(x)]^{+}$. Since $\mu\in(0,1)$, and
  using the fact that
  $\sap(L)=(\partial_{x}L,-\partial_{\lambda}L)^{-1}(0_{n},0_{p})$, we
  deduce that the right-hand-side of~\eqref{sec3:sub1:psi} equals to
  zero if and only if $(x,\lambda)\in\sap(L)$.  Since
  $(x,\lambda)\in\rln\times\rlp$ and $\psi\in\LL_{\rm
    SPD}V(x,\lambda)$ are arbitrary, it follows $\LL_{\rm
    SPD}V(x,\lambda)\subset(-\infty,0)$ for all
  $(x,\lambda)\notin\sap(L)$.  Hence, the set of saddle points
  $\sap(L)$ is strongly globally asymptotically stable
  under~\eqref{sec3:spdynamics}, concluding the proof.
\end{proof}

Recall that the set of saddle points $\sap(L)$ depends on the
parameter $\kappa$ (but not on $\mu$) and thus, trajectories of the
saddle-point dynamics~\eqref{sec3:spdynamics} need not converge to a
(primal) solution of~\eqref{sec3:probstatement}, unless the penalty
parameter $\kappa$ is exact, cf. Lemma~\ref{sec3:lm:spthm}.

\begin{remark}[Alternative convergence proof via the LaSalle
  Invariance Principle]\label{sec3:rm:lasalle}
  {\rm The strong global asymptotic stability of $\sap(L)$
    under~\eqref{sec3:spdynamics} can also be established using the
    alternative (weak) Lyapunov function
    \begin{equation*}
      V(x,\lambda)=\frac{1}{2}\lVert x-x^{\star}\rVert^{2}+\frac{1}{2}\lVert
      \lambda-\lambda^{\star}\rVert^{2},
    \end{equation*}
    where $(x^{\star},\lambda^{\star})\in\sap(L)$ is arbitrary. In
    fact, from the proof of Theorem~\ref{sec3:sub1:th:lyapunov}, one
    can deduce that the Lie derivative of $V$
    along~\eqref{sec3:spdynamics} is negative semidefinite, implying
    stability, albeit not asymptotic stability. To conclude the
    latter, one can invoke the LaSalle Invariance Principle for
    differential inclusions~\cite{AB-FC:99} to identify the limit
    points of the trajectories as the set of saddle points. In fact,
    this is the approach commonly taken in the literature
    characterizing the convergence properties of saddle-point
    dynamics, see
    e.g.,~\cite{AC-BG-JC:17-sicon,DF-FP:10,TH-IL:14,JW-NE:11} and
    references therein.  This approach has the disadvantage that, $V$
    not being a strict Lyapunov function, it cannot be used to
    characterize properties of the solutions
    of~\eqref{sec3:spdynamics} beyond asymptotic convergence. By
    constrast, the Lyapunov function~\eqref{sec3:sub1:lyapunov}
    identified in the proof of Theorem~\ref{sec3:sub1:th:lyapunov}
    opens the way to the study of other properties of the solutions
    such as the characterization of the rate of convergence, the
    robustness against disturbances via the notion of input-to-state
    stability, or the design of opportunistic state-triggered
    implementations that naturally result in aperiodic discrete-time
    algorithms.  }\oprocend
\end{remark}

Point-wise convergence of the solutions of~\eqref{sec3:spdynamics}~in
the set $\sap(L)$ follows from the stability of the each individual
saddle point and the asymptotic stability of the set $\sap(L)$
established in Theorem~\ref{sec3:sub1:th:lyapunov}, as stated in the
following result. The proof is analogous to the case of ordinary
differential equations, cf.~\cite[Corollary~5.2]{SPB-DSB:03}), and
hence we omit it for reasons of space.

\begin{corollary}[Point-wise asymptotic
  convergence]\label{sec3:sub1:co:pwconv}
  Any solution $(x,\lambda)\in\AC([0,+\infty),\rln\times\rlp)$
  of~\eqref{sec3:spdynamics}~starting from $\rln\times\rlp$ converges
  asymptotically to a point in the set~$\sap(L)$.
\end{corollary}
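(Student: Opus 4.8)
The plan is to combine two facts that are already at our disposal. First, Theorem~\ref{sec3:sub1:th:lyapunov} gives that $\sap(L)$ is strongly globally asymptotically stable under~\eqref{sec3:spdynamics}; in particular, for any solution $(x,\lambda)\in\AC([0,+\infty),\rln\times\rlp)$ one has $\dist((x(t),\lambda(t)),\sap(L))\to0$ as $t\to+\infty$. Second, as observed in Remark~\ref{sec3:rm:lasalle}, \emph{each individual} point $(x^\star,\lambda^\star)\in\sap(L)$ is Lyapunov stable under~\eqref{sec3:spdynamics}: the quadratic function $V_{\rm pt}(x,\lambda)=\tfrac12\lVert x-x^\star\rVert^2+\tfrac12\lVert\lambda-\lambda^\star\rVert^2$ satisfies $\LL_{\rm SPD}V_{\rm pt}(x,\lambda)\subset(-\infty,0]$ for all $(x,\lambda)$, which follows from the monotonicity of the saddle-point operator $(\partial_{x}L,-\partial_{\lambda}L)$ (a consequence of $L$ being convex-concave) together with $0\in(\partial_{x}L,-\partial_{\lambda}L)(x^\star,\lambda^\star)$; hence $t\mapsto V_{\rm pt}(x(t),\lambda(t))$ is nonincreasing along every solution, and taking $\delta=\varepsilon$ shows that any solution started in $\BB((x^\star,\lambda^\star),\delta)$ remains in $\BB((x^\star,\lambda^\star),\varepsilon)$ for all subsequent time. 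Point-wise convergence will then follow from the classical set-versus-point stability argument (cf.~\cite[Corollary~5.2]{SPB-DSB:03}).

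Carrying this out: fix a solution $(x,\lambda)$ of~\eqref{sec3:spdynamics}. Since $\sap(L)$ is compact (established at the outset of the proof of Theorem~\ref{sec3:sub1:th:lyapunov}) and $\dist((x(t),\lambda(t)),\sap(L))\to0$, the trajectory eventually lies in a bounded neighborhood of $\sap(L)$ and is therefore bounded on $[0,+\infty)$; consequently its $\omega$-limit set $\Omega$ is nonempty. Moreover, since $\sap(L)$ is closed and the distance to it vanishes along the trajectory, every $\omega$-limit point lies in $\sap(L)$, i.e., $\Omega\subset\sap(L)$. Pick any $z^\star=(x^\star,\lambda^\star)\in\Omega$. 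Given $\varepsilon>0$, let $\delta>0$ be as in the stability estimate for $z^\star$ above. Because $z^\star\in\Omega$, there exists $T\geq0$ with $(x(T),\lambda(T))\in\BB(z^\star,\delta)$; applying the stability estimate to the time-shifted map $t\mapsto(x(t+T),\lambda(t+T))$, which is again a solution of~\eqref{sec3:spdynamics} by autonomy, yields $(x(t),\lambda(t))\in\BB(z^\star,\varepsilon)$ for all $t\geq T$. As $\varepsilon>0$ was arbitrary, $(x(t),\lambda(t))\to z^\star\in\sap(L)$ as $t\to+\infty$, which is the claim.

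I do not expect a genuine obstacle here: the argument is a transcription of the standard ODE proof, and the only point requiring a little care is the set-valued framework. This is resolved by noting that $V_{\rm pt}$ is nonincreasing along \emph{every} solution (not just a distinguished one), which is exactly what legitimizes applying the stability estimate to the shifted trajectory, and by recalling that solutions of~\eqref{sec3:spdynamics} are in fact unique (the saddle-point operator is maximal monotone, cf.\ the remark following Theorem~\ref{sec3:sub1:th:lyapunov}), so that $\omega$-limit sets enjoy their usual properties. With this in place the classical reasoning goes through verbatim, which is why the full details are omitted.
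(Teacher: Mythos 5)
Your argument is correct and is precisely the route the paper indicates (and omits for space): combine the Lyapunov stability of each individual saddle point, obtained from the weak quadratic function of Remark~\ref{sec3:rm:lasalle} via monotonicity of the saddle-point operator, with the set attractivity from Theorem~\ref{sec3:sub1:th:lyapunov}, following the classical set-versus-point argument of~\cite[Corollary~5.2]{SPB-DSB:03}. Your write-up simply supplies the details the paper leaves to the reader.
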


%%%%%%%%%%%%%%%%%%%%%%%%%%%%%%%%%%%%%%%%%%%%%%%%%%%%%%%%%%%%%%%%%%%%%%%%%%%%%%%%
\subsection{Performance Characterization}\label{sec3:sub2}

In this section, we characterize the exponential convergence rate of
solutions of the saddle-point dynamics~\eqref{sec3:spdynamics} for the
case when the convex optimization problem~\eqref{sec3:probstatement}
is subject to equality constraints only. In order to do so, we pose
additional convexity and regularity assumptions on the objective
function of~\eqref{sec3:probstatement}. We have gathered in the
Appendix various intermediate results to ease the exposition of the
following result.

\begin{theorem}[Exponential convergence]\label{sec3:sub2:th:exp}
  Let $L:\rln\times\rlp\to\rl$ with $\mu\in(0,1)$. Suppose that
  $f\in\CCC(\rln,\rl)$ and $\partial(\nabla f)\succ0$. Then, the
  (singleton) set $\sap(L)$ is exponentially stable
  under~\eqref{sec3:spdynamics}.
\end{theorem}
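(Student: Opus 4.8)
The plan is to construct a strict Lyapunov function, in the spirit of $V$ in~\eqref{sec3:sub1:lyapunov}, that now decays at an exponential rate. Since we are dealing with equality constraints only ($m = 0$), the augmented Lagrangian reduces to $L(x,\lambda) = f(x) + \tfrac{1}{2\mu}\lVert h(x)\rVert^2 + \langle\lambda, h(x)\rangle$, which is now continuously differentiable with locally Lipschitzian gradient, so the saddle-point dynamics~\eqref{sec3:spdynamics} become the single-valued ODE $\dot{x} = -\nabla_x L(x,\lambda) = -(\nabla f(x) + \tfrac{1}{\mu}A^\top h(x) + A^\top\lambda)$, $\dot{\lambda} = h(x) = Ax - b$. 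The assumption $\partial(\nabla f)\succ 0$ (together with $f\in\CCC$) makes $\nabla f$ strictly monotone, hence the saddle point $(x^\star,\lambda^\star)$ is unique --- uniqueness of $x^\star$ from strict convexity, uniqueness of $\lambda^\star$ from $\rank(A) = p$. I would first record this, so that $\sap(L)$ is genuinely a singleton.

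**Key steps.** Let $z = (x - x^\star, \lambda - \lambda^\star)$. I would take the candidate $V(z) = \tfrac12\langle Pz, z\rangle$ with $P$ the matrix in~\eqref{sec3:matrixp} (positive definite for $\mu\in(0,1)$), possibly plus a small cross term --- more on that below. Writing the dynamics in error coordinates and using the Extended Mean-Value Theorem (Proposition~\ref{sec2:sub1:pr:emvt}) to write $\nabla f(x) - \nabla f(x^\star) = H_x(x - x^\star)$ for some $H_x \in \co\{\partial(\nabla f)([x,x^\star])\}$, the vector field becomes linear-like: $\dot{x} = -(H_x + \tfrac1\mu A^\top A)(x - x^\star) - A^\top(\lambda-\lambda^\star)$ and $\dot\lambda = A(x-x^\star)$. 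Then $\dot V = \langle Pz, \dot z\rangle$ and I expect the cross terms involving $A$ to cancel by the structure of $P$ (this is exactly the computation already visible in~\eqref{sec3:sub1:psi} in the $m = 0$ case), leaving something like $\dot V \leq -\lVert \tfrac1\mu A^\top h(x) + A^\top\lambda + \nabla f(x)\rVert^2 - (\tfrac1\mu - 1)\lVert h(x)\rVert^2$. The obstruction to exponential decay is that this bound vanishes on the whole subspace where $\dot x = 0$ but $x\neq x^\star$ --- i.e. it controls $\lVert\dot x\rVert$ and $\lVert\dot\lambda\rVert$ but not $\lVert z\rVert$ directly. The standard fix is to add a cross term $\epsilon\langle x - x^\star, A^\top(\lambda - \lambda^\star)\rangle$ (or $\epsilon\langle \dot x, \text{something}\rangle$) for small $\epsilon > 0$: its derivative supplies a negative $-\epsilon\lVert A(x-x^\star)\rVert^2 = -\epsilon\lVert\dot\lambda\rVert^2$-type term that, combined with strict monotonicity $\langle H_x y, y\rangle \geq \sigma\lVert y\rVert^2$ (with $\sigma > 0$ a uniform lower bound on $\lambda_{\min}$ over $\partial(\nabla f)$, available by compactness/coercivity arguments --- presumably one of the Appendix lemmas), upgrades the bound to $\dot V \leq -c\lVert z\rVert^2 \leq -c' V$ for suitable constants. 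Exponential stability of the singleton $\sap(L)$ then follows by the comparison lemma.

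**Main obstacle.** The delicate point is extracting a \emph{uniform} strong-convexity constant: $\partial(\nabla f)\succ 0$ only asserts positive definiteness pointwise, so I need a lemma guaranteeing $\inf_{z\in\rln}\lambda_{\min}(H)$ over $H\in\partial(\nabla f)(z)$ is strictly positive, or at least that it is bounded below on the sublevel sets that the trajectory remains in (which are compact by coercivity of $V$, established as in Theorem~\ref{sec3:sub1:th:lyapunov}). I would lean on whichever Appendix result provides exactly this; restricting attention to the compact sublevel set $\{V \leq V(z_0)\}$ is the cleanest route and avoids assuming strong convexity globally. A second, more routine obstacle is choosing $\epsilon$ small enough that the augmented $V$ remains positive definite while the cross-term derivative does not overwhelm the good negative terms --- this is a finite-dimensional linear-algebra estimate (bounding $\epsilon$ in terms of $\sigma$, $\lVert A\rVert$, $\mu$, and $\lambda_{\min}(P)$) that I would carry out but not belabor here. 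Finally, one should double-check that the single-valued field is (globally) Lipschitz on the relevant compact set so that solutions are defined for all $t\geq 0$, which follows from $f\in\CCC$ and affineness of $h$.
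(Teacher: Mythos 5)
Your plan is workable and contains all the essential ingredients (uniqueness of the saddle point from strict monotonicity of $\nabla f$ together with $\rank(A)=p$, the Extended Mean-Value Theorem to write $\nabla f(x)-\nabla f(x^{\star})=H(x)(x-x^{\star})$, uniform constants on compact sublevel sets supplied by the Appendix lemmas, and the comparison lemma), but it follows a genuinely different route from the paper. You take the pure quadratic $\tfrac12\langle Pz,z\rangle$ and propose to repair its Lie derivative with an $\epsilon$-weighted cross term; the paper instead reuses the nonsmooth Lyapunov function~\eqref{sec3:sub1:lyapunov}, whose extra non-quadratic part $f(x)-f(x^{\star})+\langle\lambda,h(x)\rangle$ makes the Lie derivative come out \emph{exactly} as $-\langle x-x^{\star},\nabla f(x)-\nabla f(x^{\star})\rangle-\lVert\nabla_{x}L(x,\lambda)\rVert^{2}-(\tfrac1\mu-1)\lVert h(x)\rVert^{2}=-\langle Q(x)z,z\rangle$, so the strong-monotonicity term appears for free and no cross-term bookkeeping is needed; positive definiteness of $Q(x)$ is then certified by a Schur complement with respect to $AA^{\top}\succ0$, and the rate $\eta/\vartheta$ follows from sandwiching $V$ between $\tfrac12\lambda_{\min}(P)\dist^{2}$ and $\tfrac12\vartheta\dist^{2}$ on a compact, positively invariant sublevel set. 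Two small corrections to your sketch: first, the Lie derivative of $\tfrac12\langle Pz,z\rangle$ is not $-\lVert\nabla_{x}L\rVert^{2}-(\tfrac1\mu-1)\lVert h\rVert^{2}$; it already contains $-\langle x-x^{\star},\nabla f(x)-\nabla f(x^{\star})\rangle$ (coming from the identity blocks of $P$) together with an extra cross term $-\langle \tfrac1\mu A^{\top}h(x)+A^{\top}(\lambda-\lambda^{\star}),\nabla f(x)-\nabla f(x^{\star})\rangle$ that you would have to absorb by Young's inequality. Second, your diagnosis of the obstruction is slightly off: under $H(x)\succ0$ and $\rank(A)=p$, the form $\lVert\nabla_{x}L\rVert^{2}+(\tfrac1\mu-1)\lVert h\rVert^{2}$ vanishes only at the saddle point, so the issue is not a nontrivial null subspace but obtaining a \emph{uniform} quadratic lower bound; your proposed fix (cross term plus a uniform lower bound on $\lambda_{\min}(H)$ over the compact sublevel set, which is exactly what Lemmas~\ref{app:lm:posdef} and~\ref{app:lm:compact} provide) does achieve this, at the cost of the $\epsilon$-tuning estimate that the paper's choice of $V$ avoids.
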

\begin{proof}
  Under the assumptions of the result, note that the
  dynamics~\eqref{sec3:spdynamics} take the form of a differential equation,
\begin{equation*}
  \begin{cases}
    (\dot{x},\dot{\lambda}) = -(\nabla_{x}L,-\nabla_{\lambda}L)
    (x,\lambda) ,
    % (\dot{x}(t),\dot{\lambda}(t))\ +(\partial_{x}L,-\partial_{\lambda}L)
    % (x(t),\lambda(t))\ni(0_{n},0_{p}),  &\text{a.e. on $(0,t_{+})$},
    \\
    (x(0),\lambda(0))=(x_{0},\lambda_{0})\in\rln\times\rlp.
  \end{cases}
\end{equation*}
  Let $L:\rln\times\rlp\to\rl$ with $\mu\in(0,1)$. Since
  $\partial(\nabla f)\succ0$, it follows that $\nabla f$ is strictly
  monotone~\cite[Example 2.2]{JBHU-JJS-VHN:84}. Therefore, for any
  fixed $\lambda$, the mapping $x\mapsto \nabla_{x}L(x,\lambda)$ is
  strictly monotone as well. Thus, by assumption (A1), the set of
  saddle points of $L$ is a singleton, i.e.,
  $\sap(L)=\{x^{\star}\}\times\{\lambda^{\star}\}$, where
  $\lambda^{\star}=-(AA^{\top})^{-1}A\nabla f(x^{\star})$. In this
  case, the Lyapunov function $V$ as defined
  in~\eqref{sec3:sub1:lyapunov} reads
  \begin{equation*}
    V(x,\lambda)=f(x)-f(x^{\star})+\frac{1}{2\mu}\lVert h(x)\rVert^{2}
    +\langle\lambda,h(x)\rangle+\frac{1}{2}\dist((x,\lambda),\sap(L))^{2}.
  \end{equation*}
  and we readily obtain from the proof of
  Theorem~\ref{sec3:sub1:th:lyapunov} that
  \begin{equation*}
    V(x,\lambda)\geq\frac{1}{2}\lambda_{\min}(P)\dist((x,\lambda),\sap(L))^{2},
  \end{equation*}
  for all $(x,\lambda)\in\rln\times\rlp$, where
  $P\in\rl^{(n+p)\times(n+p)}$ is defined as in~\eqref{sec3:matrixp}.

  Our next objective is to upper bound the evolution of $V$ along the
  solutions of~\eqref{sec3:spdynamics} in terms of the distance to
  $\sap(L)$. Let $(x,\lambda)\in\rln\times\rlp$ and consider the Lie
  derivative of $V$ with respect to~\eqref{sec3:spdynamics} at
  $(x,\lambda)$, i.e.,
  \begin{align*}
    \LL_{\rm SPD}V(x,\lambda)=-\langle x-x^{\star}, \nabla f(x)-\nabla
    f(x^{\star})\rangle - \lVert\nabla_{x}L(x,\lambda)\rVert^{2} -
    \Big(\frac{1}{\mu}-1\Big)\lVert-\nabla_{\lambda}L(x,\lambda)\rVert^{2},
  \end{align*}
  where we have used the fact that $0 = \nabla_x L(x^\star,\lambda^\star)
  = \nabla f(x^\star) + \frac{1}{\mu} A^\top h(x^\star) + A^\top
  \lambda^{\star}$ and $0 = \nabla_\lambda L(x^\star,\lambda^\star) = h(x^\star)$.
  Now, since $f\in\CCC(\rln,\rl)$, the gradient $\nabla f$ is locally
  Lipschitzian and thus, the extended Mean-Value Theorem,
  cf. Proposition~\ref{sec2:sub1:pr:emvt}, yields
  \begin{equation*}
    \nabla f(x) - \nabla f(x^{\star})\in \co\big\{\partial(\nabla f([x,
    x^{\star}]))\big\}(x-x^{\star}),
  \end{equation*}
  where
  $[x,x^{\star}]=\{x+\theta(x^{\star}-x)\mid\theta\in[0,1]\}$. By
  Lemma~\ref{app:lm:posdef}, there exists a symmetric and positive
  definite matrix $H(x)\in\co\big\{\partial(\nabla
  f([x,x^{\star}]))\big\}$ such that $\nabla f(x)-\nabla
  f(x^{\star})=H(x)(x-x^{\star})$. Hence, after some computations, one
  can obtain
  \begin{equation*}
    \LL_{\rm SPD}V(x,\lambda)=-\big\langle Q(x)(x-x^{\star},\lambda-
    \lambda^{\star}),(x-x^{\star},\lambda-\lambda^{\star})\big\rangle,
  \end{equation*}
  where the symmetric matrix $Q(x)\in\rl^{(n+p)\times(n+p)}$ is given
  by
  \begin{equation*}
    Q=\begin{pmatrix}
      H+\Big(H+\dfrac{1}{\mu}A^{\top}A\Big)^{\top}\Big(H+\dfrac{1}{\mu}A^{\top}A\Big)
      + \Big(\dfrac{1}{\mu}-1\Big)
      A^{\top}A & H^{\top}A^{\top}+\dfrac{1}{\mu}A^{\top}AA^{\top}
      \\
      AH+\dfrac{1}{\mu}AA^{\top}A & AA^{\top}
    \end{pmatrix}.
  \end{equation*}
  By assumption (A1), we have that $AA^{\top}\succ0$. The Schur
  complement of $AA^{\top}$ in $Q(x)$, denoted by $Q(x)/AA^{\top}$,
  reads
  \begin{equation*}
    Q(x)/AA^{\top}=H(x)+H(x)^{\top}\big(I_{n}-A^{\top}(AA^{\top})^{-1}A\big)H(x)
    +\Big(\frac{1}{\mu}-1\Big)A^{\top}A.
  \end{equation*}
  Since $I_{n}-A^{\top}(AA^{\top})^{-1}A$ is a projection matrix,
  i.e., symmetric and idempotent, it follows that
  $I_{n}-A^{\top}(AA^{\top})^{-1}A\succeq0$. Moreover, since
  $H(x)\succ0$ (cf. Lemma~\ref{app:lm:posdef}) and $\mu\in(0,1)$, we
  conclude that $Q(x)/AA^{\top}$ is positive definite, and so is
  $Q(x)$. Thus, we have $ \LL_{\rm SPD}V(x,\lambda) \leq -
  \lambda_{\min}(Q(x))\dist((x,\lambda),\sap(L))^{2}$. More generally,
  since any $\gamma$-sublevel set $\lev V$ is compact and positively
  invariant under~\eqref{sec3:spdynamics}, and
  $\co\big\{\partial(\nabla f([x,x^{\star}]))\big\}\succ0$ for all
  $x,x^{\star}\in\rln$ (cf. Lemma~\ref{app:lm:posdef}), we conclude
  \begin{equation*}
    \LL_{\rm SPD}V(x,\lambda)\leq-\eta\dist((x,\lambda),\sap(L))^{2},
  \end{equation*}
  for all $(x,\lambda)\in\lev V$, where
  \begin{equation*}
    \eta=\min_{(x,\lambda)\in\lev V}\min_{H(x)\in\co\{\partial(\nabla
      f([x,x^\ast]))\}}\lambda_{\min}(Q(x))>0.
  \end{equation*}
  Note that both minima are attained since $\lev V$ and
  $\co\big\{\partial(\nabla f([x,x^{\star}]))\big\}$ are compact sets,
  cf.~Lemma~\ref{app:lm:compact}.

  We now proceed to quadratically upper bound the function $V$. By
  convexity of $f$ and $h$, we obtain
  \begin{align*}
    V(x,\lambda)\leq&~\langle x-x^{\star},\nabla f(x)-\nabla
    f(x^{\star})\rangle +\frac{1}{2\mu}\lVert
    h(x)\rVert^{2}+\langle\lambda-\lambda^{\star},h(x)
    \rangle\\
    &+\frac{1}{2}\dist((x,\lambda),\sap(L))^{2}\\
    =&~\frac{1}{2}\big\langle
    R(x)(x-x^{\star},\lambda-\lambda^{\star}),(x-x^{\star},
    \lambda-\lambda^{\star})\big\rangle,
  \end{align*}
  where the symmetric and positive definite matrix
  $R(x)\in\rl^{(n+p)\times(n+p)}$ is given by
  \begin{equation*}
    R(x)=\begin{pmatrix}
      2H(x)+\dfrac{1}{\mu}A^{\top}A+I_{n} & A^{\top}\\
      A                                   & I_{p}
    \end{pmatrix}.
  \end{equation*}
  Similar arguments as above yield
  %
%  \marginJC{Which ones specifically? It's better again to elaborate
%    here. }
  %
  \begin{equation*}
    V(x,\lambda)\leq\frac{1}{2}\vartheta\dist((x,\lambda),\sap(L))^{2}
  \end{equation*}
  for all $(x,\lambda)\in\lev V$, where
  \begin{equation*}
    \vartheta=\max_{(x,\lambda)\in\lev V}\max_{H(x)\in\co\{\partial(\nabla
      f([x,x^\ast]))\}}\lambda_{\max}(R(x))>0.
  \end{equation*}
  Since $\dfrac{{\rm d}}{{\rm d}t}V(x(t),\lambda(t))=\LL_{\rm
    SPD}V(x(t),\lambda(t))$ for all $t\in[0,+\infty)$, we have
  \begin{equation*}
    \frac{{\rm d}}{{\rm d}t}V(x(t),\lambda(t))\leq-\eta\dist((x(t),\lambda(t)),
    \sap(L))^{2}\leq-\frac{2\eta}{\vartheta}V(x(t),\lambda(t)),
  \end{equation*}
  for all $t\in[0,+\infty)$. Integration yields
  \begin{equation*}
    V(x(t),\lambda(t))\leq V(x_{0},\lambda_{0})\exp\left(-\frac{2\eta}{\vartheta}t\right),
  \end{equation*}
  and therefore,
  \begin{equation*}
    \dist((x,\lambda),\sap(L))\leq\sqrt{\frac{\vartheta}{\lambda_{\min}(P)}}
    \dist((x_{0},\lambda_{0}),\sap(L))\exp\left(-\frac{\eta}{\vartheta}t\right),
  \end{equation*}
  for all $t\in[0,+\infty)$. Therefore, the singleton set $\sap(L)$ is
  exponentially stable and the convergence rate of solutions
  of~\eqref{sec3:spdynamics} is upper bounded by $\eta/\vartheta$.
\end{proof}

The exponential convergence rate in Theorem~\ref{sec3:sub2:th:exp}
depends not only on the initial condition
$(x_{0},\lambda_{0})\in\rln\times\rlp$, but also on the convexity and
regularity assumptions on the objective function. However, if
$f\in\CC^{2}(\rln,\rl)$ is quadratic, then the convergence rate is
determined by $\lambda_{\min}(Q)/\lambda_{\max}(R)$, independently of
$x\in\rln$.

\begin{remark}[Local exponential stability]
  {\rm By exploiting the proof of Theorem~\ref{sec3:sub2:th:exp}, the
    exponential stability of the singleton set $\sap(L)$
    under~\eqref{sec3:spdynamics} is also guaranteed under the
    condition $\rank(A)=n$. In fact, whenever $f\in\CC^{2}(\rln,\rl)$,
    both conditions are also sufficient for the local exponential
    stability of $\sap(L)$ considering the linearization
    of~\eqref{sec3:spdynamics} around
    $(x^{\star},\lambda^{\star})\in\sap(L)$, where the Jacobian takes
    the form
    \begin{equation*}
      J_{\rm SPD}=\begin{pmatrix}
        -\nabla_{xx}L(x^{\star},\lambda^{\star}) &-\nabla_{x\lambda}L(x^{\star},
        \lambda^{\star})\\
        +\nabla_{\lambda x}L(x^{\star},\lambda^{\star}) &+\nabla_{\lambda\lambda}
        L(x^{\star},\lambda^{\star})
      \end{pmatrix}\in\rl^{(n+p)\times(n+p)}.
    \end{equation*}
    Note that the Jacobian $J_{\rm SPD}$ is negative definite if
    $\nabla_{xx}L(x^{\star},\lambda^{\star})\succ0$,
    cf.~\cite[Proposition 4.23]{DPB:82}. However, this is precisely
    the case under the hypothesis of Theorem~\ref{sec3:sub2:th:exp},
    i.e., whenever $\nabla^{2}f(x)\succ0$ for all $x\in\rln$ or
    $\rank(A)=n$.}\oprocend
\end{remark}

\begin{remark}[Connection with discrete-time algorithms]
  {\rm Compared to discrete-time solvers such as the augmented
    Lagrangian method (originally known as the method of multipliers),
    the exponential convergence rate for the continuous-time dynamics
    here corresponds to a linear convergence rate for its first-order
    Euler discretization. It is worth mentioning that we obtain the
    exponential rate under slightly weaker conditions than the ones
    usually stated in the literature for the augmented Lagrangian
    method, namely Lipschitz continuity of $\nabla f$ and strong
    convexity of $f$.}\oprocend
\end{remark}

\begin{remark}[Performance analysis under inequality and equality
  constraints] {\rm The algorithm performance bound derived considers
    convex optimization scenarios subject to equality constraints
    only. A natural question is whether the performance analysis can
    be extended to the general case including both inequality and
    equality constraints. The performance bound derived in
    Theorem~\ref{sec3:sub2:th:exp} holds true whenever the inequality
    constraints are inactive. However, we face various technical
    challenges in what concerns the analysis of the nonsmooth Lyapunov
    function~\eqref{sec3:sub1:lyapunov} whenever the
    $\ell_1$-exact-penalty terms kick in. For example, given a large
    value of the penalty parameter $\kappa$, it is challenging to
    quadratically upper bound the function $V$.}\oprocend
\end{remark}

\myclearpage
%% CONVEX OPTIMIZATION VIA SADDLE-POINT-LIKE DYNAMICS %%%%%%%%%%%%%%%%%%%%%%%%%%
\section{Convex Optimization via Saddle-Point-Like
  Dynamics}\label{sec4}

As we noted in Section~\ref{sec3}, the condition identified in
Lemma~\ref{sec3:lm:spthm} for the penalty parameter $\kappa$ to be
exact relies on knowledge of the dual solution set.  In turn,
exactness is required to ensure that the saddle-point
dynamics~\eqref{sec3:spdynamics} converges to a solution
of~\eqref{sec3:probstatement}. Motivated by these observations and
building on our results above, in this section we propose
discontinuous saddle-point-like dynamics that do not rely on a priori
knowledge of the penalty parameter $\kappa$ and converge to a solution
of the optimization problem.

Let $G=\{x\in\rln\mid g(x)\leq0_{m}\}$ denote the inequality
constraint set associated with the convex
program~\eqref{sec3:probstatement}. Let the set-valued flow
$F:G\times\rlp\rightrightarrows\rln$ be defined by
\begin{equation*}
  F(x,\lambda)=-\nabla\Big(\frac{1}{2\mu}\lVert h(x)\rVert^{2}\Big)-\nabla_{x}
  \langle\lambda,h(x)\rangle-\partial f(x).
\end{equation*}
The choice is motivated by the fact that, for $(x,\lambda)\in\inter
G\times\rlp$, we have
$-\partial_{x}L(x,\lambda)=F(x,\lambda)$. Consider now the
saddle-point-like dynamics defined over $G\times\rlp$,
\begin{equation}
  \renewcommand{\theequation}{SPLD}\tag{\theequation}\label{sec4:spldynamics}
  \begin{cases}
    (\dot{x},\dot{\lambda}) \in (P_{T_G}(F),\partial_{\lambda}L)
    (x,\lambda) ,
    % (\dot{x},\dot{\lambda})\ -(\Pi_{G}(F),\partial_{\lambda}L)
    % (x,\lambda)\ni(0_{n},0_{p}),
    %  &\text{a.e. on $(0,t_{+})$},
    \\
    (x(0),\lambda(0))=(x_{0},\lambda_{0})\in G\times\rlp,
  \end{cases}
\end{equation}
where the projection operator $P_{T_G}$ is defined
in~\eqref{sec2:sub2:projop}. Since the mapping $(x,\lambda)\mapsto
F(x,\lambda)$ is locally bounded, upper semi-continuous and takes
nonempty, convex and compact values, the existence of local solutions
$(x,\lambda)\in\AC([0,t_{+}),G\times\rlp)$ of~\eqref{sec4:spldynamics}
is guaranteed by Lemma~\ref{sec2:sub2:lm:prexistence}.

Our strategy to show that the saddle-point-like
dynamics~\eqref{sec4:spldynamics} also converge to the set of saddle
points is to establish that, in fact, its solutions are also solutions
of the saddle-point dynamics~\eqref{sec3:spdynamics} when the penalty
parameter $\kappa$ is sufficiently large. To make this precise, we
first investigate the explicit computation of the projection
operator~$P_{T_G}$. Recall that $T_{G}(x)$ and $N_{G}(x)$ denote the
tangent and normal cone of $G\subset\rln$ at $x\in G$,
respectively. Let the set of unit outward normals to $G$ at $x\in\bd
G$ be defined by
\begin{equation*}
  N_{G}^{\sharp}(x)=N_{G}(x)\cap\bd\overline{\BB}(0,1).
\end{equation*}
The following geometric interpretation of $P_{T_G}$ is well-known in
the literature of locally projected dynamical
systems~\cite{AN-DZ:96,MP-MP:02}:
\begin{enumerate}[(i)]
\item if $(x,\lambda)\in\inter G\times\rlp$, then
  $P_{T_G(x)}(F(x,\lambda))=F(x,\lambda)$;
	\item if $(x,\lambda)\in\bd G\times\rlp$, then
		\begin{equation*}
			P_{T_G(x)}(F(x,\lambda))=\bigcup_{\xi\in F(x,\lambda)}\xi-\max\big\{0,
			\langle\xi,n^{\star}(x,\xi)\rangle\big\}n^{\star}(x,\xi),
		\end{equation*}
	where
	\begin{equation}\label{sec4:subproj}
		n^{\star}(x,\xi)\in\argmax_{n\in N_{G}^{\sharp}(x)}\langle\xi,n\rangle.
	\end{equation}
\end{enumerate}
Note that if $\{\xi\}\cap T_{G}(x)\neq\emptyset$ for some
$(x,\lambda)\in\bd G\times\rlp$ and $\xi\in F(x,\lambda)$, then
$\sup_{n\in N_{G}^{\sharp}(x)}\langle\xi,n\rangle\leq0$, and by
definition of $P_{T_G}$, no projection needs to be performed. The
following result establishes the existence and uniqueness of the
maximizer $n^{\star}(x,\xi)$ of~\eqref{sec4:subproj}~whenever
$\{\xi\}\cap T_{G}(x)=\emptyset$.

\begin{lemma}[Existence and uniqueness]\label{sec4:lm:exanduni}
  Let $(x,\lambda)\in\bd G\times\rlp$. If there exists $\xi\in
  F(x,\lambda)$ such that $\sup_{n\in
    N_{G}^{\sharp}(x)}\langle\xi,n\rangle>0$, then the maximizer
  $n^{\star}(x,\xi)$ of~\eqref{sec4:subproj} exists and is unique.
\end{lemma}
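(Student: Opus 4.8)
The claim concerns the maximization problem $\max_{n\in N_G^\sharp(x)}\langle\xi,n\rangle$, where $N_G^\sharp(x)=N_G(x)\cap\bd\overline{\BB}(0,1)$ is the set of unit outward normals at a boundary point $x\in\bd G$, and the hypothesis is that the optimal value is strictly positive. My plan is to exploit the convexity of $G$ to show that $N_G(x)$ is a closed convex cone, hence $N_G^\sharp(x)$ is a nonempty, closed subset of the unit sphere, which gives existence by Weierstrass; and then to prove uniqueness by a strict-convexity argument. Let me think about the structure.

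**Existence.** Since $G=\{x\in\rln\mid g(x)\le 0_m\}$ is nonempty, closed and convex (as $g$ is convex), the normal cone $N_G(x)$ is a nonempty, closed convex cone for every $x\in G$, and is nontrivial (i.e. $N_G(x)\neq\{0\}$) precisely when $x\in\bd G$. Therefore $N_G^\sharp(x)=N_G(x)\cap\bd\overline{\BB}(0,1)$ is nonempty and, being the intersection of a closed set with a compact set, is compact. The objective $n\mapsto\langle\xi,n\rangle$ is continuous, so a maximizer $n^\star(x,\xi)$ exists by the Weierstrass theorem. (The hypothesis that the optimal value is positive is not needed for existence; it will be used for uniqueness, and it also guarantees $\xi\notin T_G(x)$ so that a genuine projection is required.)

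**Uniqueness.** Here is where the positivity hypothesis enters. Suppose $n_1,n_2\in N_G^\sharp(x)$ are two distinct maximizers, with common optimal value $v:=\langle\xi,n_1\rangle=\langle\xi,n_2\rangle>0$. Consider the midpoint $\bar n=\tfrac12(n_1+n_2)$. Since $N_G(x)$ is a convex cone, $\bar n\in N_G(x)$. Because $n_1,n_2$ are distinct unit vectors, strict convexity of the Euclidean norm gives $\lVert\bar n\rVert<1$, so $\bar n\neq 0$ (indeed $\lVert\bar n\rVert\in(0,1)$, and it is nonzero because $\langle\xi,\bar n\rangle=v>0$ forces $\bar n\neq 0$). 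Now normalize: $\hat n:=\bar n/\lVert\bar n\rVert\in N_G^\sharp(x)$, and
\begin{equation*}
  \langle\xi,\hat n\rangle=\frac{\langle\xi,\bar n\rangle}{\lVert\bar n\rVert}
  =\frac{v}{\lVert\bar n\rVert}>v,
\end{equation*}
which contradicts the optimality of $n_1$ and $n_2$. Hence the maximizer is unique. I would present the argument essentially in this form, making explicit that the two ingredients are (a) convexity of the cone $N_G(x)$, used to place the midpoint back in the cone, and (b) strict convexity of the $\ell_2$-ball, used to get $\lVert\bar n\rVert<1$; the sign hypothesis $v>0$ is exactly what lets the normalization strictly increase the objective.

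**Main obstacle.** The argument is short and the only real subtlety is making sure every invocation of convex-analytic facts about $N_G(x)$ is legitimate — that $N_G(x)$ is indeed a closed convex cone and is nontrivial at boundary points of a closed convex set defined by a convex inequality. These are standard (e.g. from \cite{RTR:70} or \cite{JPA-AC:84}); since the Slater condition (A2) holds, there is no pathology with $G$ having empty interior, so $\bd G$ behaves as expected and $N_G(x)\neq\{0\}$ on $\bd G$. Beyond that, the normalization step in the uniqueness proof is the crux: one must check $\lVert\bar n\rVert>0$ so that $\hat n$ is well defined, which is precisely where the hypothesis $\langle\xi,\bar n\rangle=v>0$ is used. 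Everything else is routine.
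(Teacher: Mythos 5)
Your proof is correct and follows essentially the same route as the paper's: existence via compactness of $N_{G}^{\sharp}(x)$, and uniqueness by averaging two putative maximizers, renormalizing (which is legitimate because the cone $N_G(x)$ is convex and the distinct unit vectors force $\lVert n_1+n_2\rVert<2$), and using the positivity of the optimal value to obtain a strictly larger objective value --- a contradiction. The only cosmetic difference is that you use the midpoint $\tfrac12(n_1+n_2)$ while the paper works directly with the sum $n_1+n_2$ before normalizing, which is the same argument.
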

\begin{proof}
  Let $(x,\lambda)\in\bd G\times \rlp$ and suppose there exists
  $\xi\in F(x,\lambda)$ such that $\sup_{n\in
    N_{G}^{\sharp}(x)}\langle\xi,n\rangle>0$. By definition, the
  normal cone $N_{G}(x)$ of $G$ at $x\in\bd G$ is closed and
  convex. Existence of $n^{\star}(x,\xi)$ follows from compactness of
  the set $N_{G}^{\sharp}(x)$. Now, let $\tilde{n}^{\star}(x,\xi)$ and
  $\hat{n}^{\star}(x,\xi)$ be two distinct maximizer
  of~\eqref{sec4:subproj} such that
  $\langle\xi,\tilde{n}^{\star}(x,\xi)\rangle>0$ and
  $\langle\xi,\hat{n}^{\star}(x,\xi)\rangle>0$. Convexity implies
  $(\tilde{n}^{\star}(x,\xi) +
  \hat{n}^{\star}(x,\xi))/\lVert\tilde{n}^{\star}(x,\xi)+\hat{n}^{\star}(x,\xi)\rVert\in
  N_{G}^{\sharp}(x)$. Therefore, it follows
  \begin{align*}
    \frac{\langle\xi,\tilde{n}^{\star}(x,\xi)+\hat{n}^{\star}(x,\xi)\rangle}
    {\lVert\tilde{n}^{\star}(x,\xi)+\hat{n}^{\star}(x,\xi)\rVert}=
    \frac{2\langle\xi,\tilde{n}^{\star}(x,\xi)\rangle}{\lVert\tilde{n}^{\star}
      (x,\xi)+\hat{n}^{\star}(x,\xi)\rVert}>\langle\xi,\tilde{n}^{\star}(x,\xi)
    \rangle,
  \end{align*}
  which contradicts the fact that $\tilde{n}^{\star}(x,\xi)$
  maximizes~\eqref{sec4:subproj}.
\end{proof}

We note that the computational complexity of
solving~\eqref{sec4:subproj}~depends not only on the problem
dimensions $n,p,m>0$, but also on the convexity and regularity
assumptions of the problem data, i.e., on $f$, $h$ and $g$. The
following result establishes a relationship between the solutions
of~\eqref{sec4:spldynamics} and~\eqref{sec3:spdynamics}.

\begin{proposition}[Relationship of solutions]\label{sec4:pr:relsol}
  Let $(x,\lambda):[0,+\infty)\to G\times\rlp$ be a solution
  of~\eqref{sec4:spldynamics} starting from $(x_{0},\lambda_{0})\in
  G\times\rlp$. Then, there exists $\kappa>0$ such that the solution
  is also a solution of~\eqref{sec3:spdynamics}.
\end{proposition}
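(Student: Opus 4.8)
The plan is to show that, along a given solution $(x,\lambda)$ of~\eqref{sec4:spldynamics}, the $\lambda$-component already evolves exactly as in~\eqref{sec3:spdynamics} --- in both cases $\dot\lambda=h(x)$, since $\partial_\lambda L=\{h(x)\}$ irrespective of $\kappa$ --- and that the $x$-component coincides as well once $\kappa$ is large enough to ``absorb'' the projection correction through the subgradient of the $\ell_1$-penalty term. On $\inter G$ this is immediate: $P_{T_G(x)}(F(x,\lambda))=F(x,\lambda)=-\partial_xL(x,\lambda)$ for every $\kappa$, because the penalty term has zero subgradient where $g(x)<0$. So the only issue is on $\bd G$. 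There, by the geometric description of $P_{T_G}$ recalled before Lemma~\ref{sec4:lm:exanduni}, for almost every $t$ we may write $\dot x(t)=\xi(t)-n(t)$ with $\xi(t)\in F(x(t),\lambda(t))$ and $n(t)=\max\{0,\langle\xi(t),n^\star\rangle\}\,n^\star\in N_G(x(t))$, where $n^\star=n^\star(x(t),\xi(t))$; in particular $\lVert n(t)\rVert\le\lVert\xi(t)\rVert$ by Cauchy--Schwarz. Since $-\partial_xL(x,\lambda)=F(x,\lambda)-\kappa\,\partial\langle\om,[g(x)]^{+}\rangle$ and, at a point with $g_k(x)=0$, $\kappa\,\partial[g_k(x)]^{+}=\{t\zeta\mid t\in[0,\kappa],\ \zeta\in\partial g_k(x)\}$, it suffices to exhibit, for one $\kappa$ valid along the whole trajectory, a representation $n(t)=\sum_{k\in I(x(t))}\mu_k\zeta_k$ with $\zeta_k\in\partial g_k(x(t))$, $\mu_k\in[0,\kappa]$, and $I(x(t))=\{k\mid g_k(x(t))=0\}$.

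Two ingredients make this possible. First, a uniform bound on the multipliers via strong Slater: under~(A2) the normal cone of the convex sublevel set $G$ admits a conic representation $n(t)=\sum_{k\in I(x(t))}\mu_k\zeta_k$, $\mu_k\ge0$, and for the Slater point $\bar x$ (with $-g_k(\bar x)\ge\delta>0$ for all $k$) convexity of the $g_k$ gives $\langle\zeta_k,x(t)-\bar x\rangle\ge-g_k(\bar x)\ge\delta$, whence $\langle n(t),x(t)-\bar x\rangle\ge\delta\sum_k\mu_k$ and therefore $\sum_k\mu_k\le\delta^{-1}\lVert n(t)\rVert\,\lVert x(t)-\bar x\rVert\le\delta^{-1}\lVert\xi(t)\rVert\,\lVert x(t)-\bar x\rVert$. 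Second, the solution of~\eqref{sec4:spldynamics} is bounded, and this can be seen in a $\kappa$-free way. Let $(x^\star,\lambda^\star,\nu^\star)$ be a primal-dual optimizer of~\eqref{sec3:probstatement} (it exists by~(A1)--(A2), and is independent of $\kappa$), and set $W(x,\lambda)=\tfrac12\lVert x-x^\star\rVert^2+\tfrac12\lVert\lambda-\lambda^\star\rVert^2$. Using $A(x-x^\star)=h(x)$ (as $h(x^\star)=0$), the inequality $\langle n(t),x(t)-x^\star\rangle\ge0$ (because $x^\star\in G$ and $n(t)\in N_G(x(t))$), convexity of $f$, and the KKT relations at $(x^\star,\lambda^\star,\nu^\star)$ together with complementary slackness and $g(x(t))\le0$, a short computation yields $\tfrac{d}{dt}W(x(t),\lambda(t))\le-\tfrac1\mu\lVert h(x(t))\rVert^2\le0$. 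Hence $(x(t),\lambda(t))$ stays in the compact sublevel set $\{W\le W(x_0,\lambda_0)\}$, so $\lVert x(t)-\bar x\rVert\le D$ for some $D$, and, since $F$ is locally bounded, $\lVert\xi(t)\rVert\le M$ for some $M$, uniformly in $t$.

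Combining the two bounds, $\sum_k\mu_k(t)\le MD/\delta=:\kappa_0$ for all $t$, so every $\mu_k$ in the above representation of $n(t)$ lies in $[0,\kappa_0]$. Choosing any $\kappa>\kappa_0$ then gives $n(t)\in\kappa\,\partial\langle\om,[g(x(t))]^{+}\rangle$ for almost every $t$, and therefore $\dot x(t)=\xi(t)-n(t)\in F(x(t),\lambda(t))-\kappa\,\partial\langle\om,[g(x(t))]^{+}\rangle=-\partial_xL(x(t),\lambda(t))$; together with $\dot\lambda(t)=h(x(t))\in\partial_\lambda L(x(t),\lambda(t))$ this shows that $(x,\lambda)$ is a solution of~\eqref{sec3:spdynamics} for this value of $\kappa$.

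I expect the principal difficulty to be establishing the global boundedness of the solution of~\eqref{sec4:spldynamics} without reference to $\kappa$ --- that is, identifying $W$ and verifying $\dot W\le0$ using only convexity and the KKT conditions of~\eqref{sec3:probstatement} (and the dissipativity $\langle n(t),x(t)-x^\star\rangle\ge0$ of the normal-cone term), rather than a Lyapunov function tied to $\sap(L)$, which would be circular since $\sap(L)$ itself depends on $\kappa$. A secondary technical point is the conic (KKT-type) representation of $N_G(x)$ on $\bd G$ and the Slater-based multiplier estimate, which must hold uniformly over the compact set in which the trajectory is confined.
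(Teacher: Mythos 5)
Your proof is correct, and while it follows the same overall skeleton as the paper's argument (reduce to $\bd G$, show the projection correction $\max\{0,\langle\xi,n^{\star}\rangle\}n^{\star}$ is absorbed into $\kappa\,\partial\langle\om,[g(x)]^{+}\rangle$ for $\kappa$ large, then uniformize over the trajectory), the two key quantitative steps are executed differently, and in both cases your version is more explicit. First, to bound the size of the normal-cone correction relative to the penalty subdifferential, the paper introduces the rescaling factor $\sigma^{\star}(x,n^{\star})=\min\{\sigma\mid\frac{1}{\sigma}n^{\star}\in\sum_{k\in K^{\circ}(x)}\co\{\{0\},\partial g_{k}(x)\}\}$ and takes $\kappa\geq\max_{\xi}\sigma^{\star}\max\{0,\langle\xi,n^{\star}\rangle\}$, leaving implicit both that this minimum is attained (i.e., that $N_{G}(x)$ has the conic representation over the active $\partial g_{k}(x)$) and that its maximum over a compact set is finite; you instead derive the explicit Slater-based multiplier estimate $\sum_{k}\mu_{k}\leq\delta^{-1}\lVert\xi\rVert\,\lVert x-\bar{x}\rVert$, yielding the concrete threshold $\kappa_{0}=MD/\delta$. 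Second, to confine the trajectory to a compact set, the paper invokes the sublevel set of the weak Lyapunov function of Remark~\ref{sec3:rm:lasalle} and asserts invariance, which strictly speaking requires a small bootstrapping argument (the decrease of $V$ along~\eqref{sec4:spldynamics} is only known once the trajectory is already identified with a solution of~\eqref{sec3:spdynamics}); you avoid this by verifying $\tfrac{d}{dt}W\leq-\tfrac{1}{\mu}\lVert h(x)\rVert^{2}\leq0$ directly along~\eqref{sec4:spldynamics}, using only convexity, the KKT conditions of~\eqref{sec3:probstatement}, viability of the trajectory in $G$, and the dissipativity $\langle n,x-x^{\star}\rangle\geq0$ of the normal-cone term --- a computation I checked and which is correct. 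The paper's formulation buys a statement that applies to the full set-valued image $P_{T_{G}(x)}(F(x,\lambda))$ at once and a $\kappa$ expressed in terms of intrinsic geometric data; your route buys a self-contained, non-circular boundedness argument and an explicit value of $\kappa$.
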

\begin{proof}
  Since~\eqref{sec3:spdynamics} and~\eqref{sec4:spldynamics} are
  identical on $\inter G\times\rlp$, it suffices to focus our
  attention on showing
  $P_{T_G(x)}(F(x,\lambda))\subset-\partial_{x}L(x,\lambda)$ for
  points $(x,\lambda)\in\bd G\times\rlp$. Take $\xi\in F(x,\lambda)$
  and suppose $\{\xi\}\cap T_{G}(x)\neq\emptyset$. In this case, it
  follows $\sup_{n\in N_{G}^{\sharp}(x)}\langle\xi,n\rangle\leq0$, and
  by definition of $P_{T_G}$, we have $P_{T_G}(\{\xi\})=\xi$. Clearly,
  \begin{equation*}
    P_{T_G}(\{\xi\})\in\xi-\kappa~\sum_{\mathclap{k\in
        K^{\circ}(x)}}~\co\{\{0\},\partial g_{k}(x)\}, 
  \end{equation*}
  for any $\kappa>0$, where $K^{\circ}(x)=\{k\in\until{m}\mid
  g_{k}(x)=0\}$. Suppose now $\{\xi\}\cap T_{G}(x)=\emptyset$, i.e.,
  $\sup_{n\in N_{G}^{\sharp}(x)}\langle\xi,n\rangle>0$. By
  Lemma~\ref{sec4:lm:exanduni}, there exists a unique
  $n^{\star}(x,\xi)\in N_{G}^{\sharp}(x)$
  maximizing~\eqref{sec4:subproj} such that $P_{T_G}(\{\xi\}) =
  \xi-\max\big\{0,\langle\xi,n^{\star}(x,\xi)\rangle\big\}n^{\star}(x,\xi)$. Now,
  by rescaling $n^{\star}(x,\xi)$ by some constant
  $\sigma^{\star}(x,n^{\star})>0$ minimizing
  \begin{equation*}
    \min\Big\{\sigma~\Big\vert~\frac{1}{\sigma}n^{\star}(x,\xi)\in
    \sum_{\mathclap{k\in K^{\circ}(x)}}~\co\{\{0\},\partial g_{k}(x)\}\Big\},
  \end{equation*}
  the choice
  $\kappa\geq\sigma^{\star}(x,n^{\star})\max\big\{0,\langle\xi,n^{\star}(x,\xi)\rangle\big\}$
  guarantees that
  \begin{equation*}
    P_{T_G}(\{\xi\})\in\xi-\kappa~\sum_{\mathclap{k\in
        K^{\circ}(x)}}~\co\{\{0\},\partial g_{k}(x)\}.
  \end{equation*}
  Now, since $\xi\in F(x,\lambda)$ is arbitrary, if
  \begin{equation*}
    \kappa\geq\max_{\xi\in F(x,\lambda)}\sigma^{\star}(x,n^{\star})\max
    \big\{0,\langle\xi,n^{\star}(x,\xi)\rangle\big\},
  \end{equation*}
  where $F(x,\lambda)$ is nonempty, convex and compact, we conclude
  that $P_{T_G(x)}(F(x,\lambda))\subset-\partial_{x}L(x,\lambda)$. More
  generally, let $V$ be defined as in Remark~\ref{sec3:rm:lasalle} for
  some primal-dual solution $(x^{\star},\lambda^{\star})$
  of~\eqref{sec3:probstatement}. Then, any choice
  \begin{equation*}
    \kappa\geq\max_{(x,\lambda)\in\lev V\cap(G\times\rlp)}\max_{\xi\in
      F(x,\lambda)}\sigma^{\star}(x,n^{\star})\max\{0,\langle\xi,n^{\star}(x,\xi)\rangle\}, 
  \end{equation*}
  guarantees that any solution of~\eqref{sec4:spldynamics} starting in
  $\lev V\cap(G\times\rlp)$ is also a solution
  of~\eqref{sec3:spdynamics}, concluding the proof.
\end{proof}

The arbitrariness of the choice of $\gamma$ in
Proposition~\ref{sec4:pr:relsol} ensures that, given any solution
of~\eqref{sec4:spldynamics}, there exists $\kappa$ such that the
solution is also a solution of~\eqref{sec3:spdynamics}. Note that, in
general, the set of solutions of~\eqref{sec3:spdynamics} is richer
than the set of solutions
of~\eqref{sec4:spldynamics}. Figure~\ref{sec4:fig1} illustrates the
effect of an increasing penalty parameter $\kappa$
on~\eqref{sec3:spdynamics}. The combination of
Theorem~\ref{sec3:sub1:th:lyapunov} and
Proposition~\ref{sec4:pr:relsol} leads immediately to the following
result.

\begin{corollary}[Asymptotic convergence] 
  Any solution $(x,\lambda)\in\AC([0,+\infty),G\times\rlp)$
  of~\eqref{sec4:spldynamics} starting from a point in $G\times\rlp$
  converges asymptotically to a point in the set $S\times\rlp$, where
  $S\subset\rln$ is the set of solutions
  of~\eqref{sec3:probstatement}.
\end{corollary}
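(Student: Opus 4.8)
The plan is to deduce the statement by chaining three earlier results: the trajectory correspondence of Proposition~\ref{sec4:pr:relsol}, the strong global asymptotic stability of $\sap(L)$ from Theorem~\ref{sec3:sub1:th:lyapunov} together with the point-wise refinement in Corollary~\ref{sec3:sub1:co:pwconv}, and the exactness characterization of Lemma~\ref{sec3:lm:spthm}. Fix an arbitrary solution $(x,\lambda):[0,+\infty)\to G\times\rlp$ of~\eqref{sec4:spldynamics} starting from $(x_0,\lambda_0)\in G\times\rlp$, and fix a primal-dual solution $(x^\star,\lambda^\star)$ of~\eqref{sec3:probstatement} with associated inequality multiplier $\nu^\star$; these data are intrinsic to~\eqref{sec3:probstatement} and independent of the penalty parameter $\kappa$. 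Let $V$ be the quadratic function of Remark~\ref{sec3:rm:lasalle} built from $(x^\star,\lambda^\star)$, and choose $\gamma\geq V(x_0,\lambda_0)$, so that $(x_0,\lambda_0)\in\lev V\cap(G\times\rlp)$.

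First I would apply Proposition~\ref{sec4:pr:relsol}, which supplies a threshold $\kappa_0>0$ (depending on $\lev V$ and on the flow $F$) such that, for every $\kappa\geq\kappa_0$, the fixed trajectory $(x,\lambda)$ is also a solution of the saddle-point dynamics~\eqref{sec3:spdynamics} with that value of $\kappa$. The crucial point is that this lower bound is compatible with the exactness requirement: picking $\kappa>\max\{\kappa_0,\lVert\nu^\star\rVert_{\infty}\}$, and keeping $\mu\in(0,1)$, the hypotheses of Theorem~\ref{sec3:sub1:th:lyapunov}, Corollary~\ref{sec3:sub1:co:pwconv}, and Lemma~\ref{sec3:lm:spthm} are all met for this $\kappa$, while simultaneously $(x,\lambda)$ solves~\eqref{sec3:spdynamics}.

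Then I would invoke Theorem~\ref{sec3:sub1:th:lyapunov} and Corollary~\ref{sec3:sub1:co:pwconv}: being a solution of~\eqref{sec3:spdynamics} on $[0,+\infty)$, the trajectory $(x,\lambda)$ converges to some point $(\bar x,\bar\lambda)\in\sap(L)$. Finally, since $\kappa>\lVert\nu^\star\rVert_{\infty}$, Lemma~\ref{sec3:lm:spthm} applied to $(\bar x,\bar\lambda)\in\sap(L)$ gives $\bar x\in S$, hence $(\bar x,\bar\lambda)\in S\times\rlp$. As the solution was arbitrary, this proves the claim.

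The step I expect to demand the most care is the invariance issue implicit in Proposition~\ref{sec4:pr:relsol}: the sublevel set $\lev V$ is positively invariant under~\eqref{sec3:spdynamics} only once the SPLD trajectory is known to solve~\eqref{sec3:spdynamics}, whereas Proposition~\ref{sec4:pr:relsol} only establishes the pointwise inclusion $P_{T_G(x)}(F(x,\lambda))\subset-\partial_x L(x,\lambda)$ on $\lev V\cap(G\times\rlp)$. The resolution is a continuation argument on the maximal interval where the trajectory remains in $\lev V$ — there the inclusion holds, hence $V$ is nonincreasing, hence the trajectory never leaves $\lev V$ — and this conclusion is already folded into the statement of Proposition~\ref{sec4:pr:relsol}. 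Beyond that, the only thing to watch is that the two lower bounds on $\kappa$ (from the trajectory correspondence and from exactness) are imposed jointly, which is immediate by taking a maximum.
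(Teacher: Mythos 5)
Your proposal is correct and follows essentially the same route as the paper, which justifies the corollary in one line as "the combination of Theorem~\ref{sec3:sub1:th:lyapunov} and Proposition~\ref{sec4:pr:relsol}"; you simply make explicit the two ingredients that line leaves implicit, namely Corollary~\ref{sec3:sub1:co:pwconv} for point-wise convergence and Lemma~\ref{sec3:lm:spthm} (via the joint choice $\kappa>\max\{\kappa_0,\lVert\nu^{\star}\rVert_{\infty}\}$) to pass from $\sap(L)$ to $S\times\rlp$. Your observation about the continuation argument behind the positive invariance of $\lev V$ is a fair reading of what Proposition~\ref{sec4:pr:relsol} already asserts, so no gap remains.
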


The fact that the saddle-point-like dynamics~\eqref{sec4:spldynamics}
do not incorporate any knowledge of the penalty parameter $\kappa$
% (as the saddle-point dynamics~\eqref{sec3:spdynamics}) 
makes them amenable to distributed implementation in multi-agent
systems. This is the point we address in the next section.

\begin{figure}[!t]
	\centering
	\subfloat[$\kappa\ngeq\max\limits_{\xi\in F(x,\lambda)}
	\sigma^{\star}(x,n^{\star})\max\big\{0,\langle\xi,n^{\star}(x,\xi)\rangle
	\big\}$.]{\makebox[6.2cm][c]
	{\small
	\psfrag{G}{$G$}
	\psfrag{x}{$x$}
	\psfrag{k}{$\kappa$}
	\psfrag{N}{$N_{G}(x)$}
	\psfrag{F}{$F(x,\lambda)$}
	\psfrag{P}{$P_{T_G(x)}(F(x,\lambda))$}
		{\includegraphics[width=0.28\textwidth]{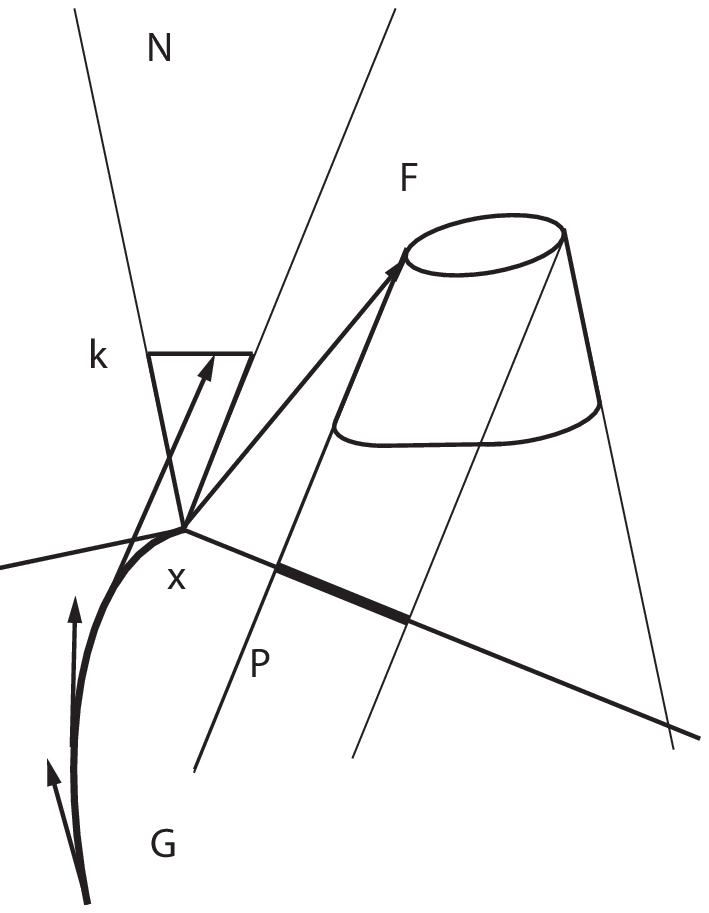}}
		}} ~~~
	\subfloat[$\kappa\geq\max\limits_{\xi\in F(x,\lambda)}
	\sigma^{\star}(x,n^{\star})\max\big\{0,\langle\xi,n^{\star}(x,\xi)\rangle
	\big\}$.]{\makebox[6.2cm][c]
	{\small
	\psfrag{G}{$G$}
	\psfrag{x}{$x$}
	\psfrag{k}{$\kappa$}
	\psfrag{N}{$N_{G}(x)$}
	\psfrag{F}{$F(x,\lambda)$}
	\psfrag{P}{$P_{T_G(x)}(F(x,\lambda))$}
	{\includegraphics[width=0.28\textwidth]{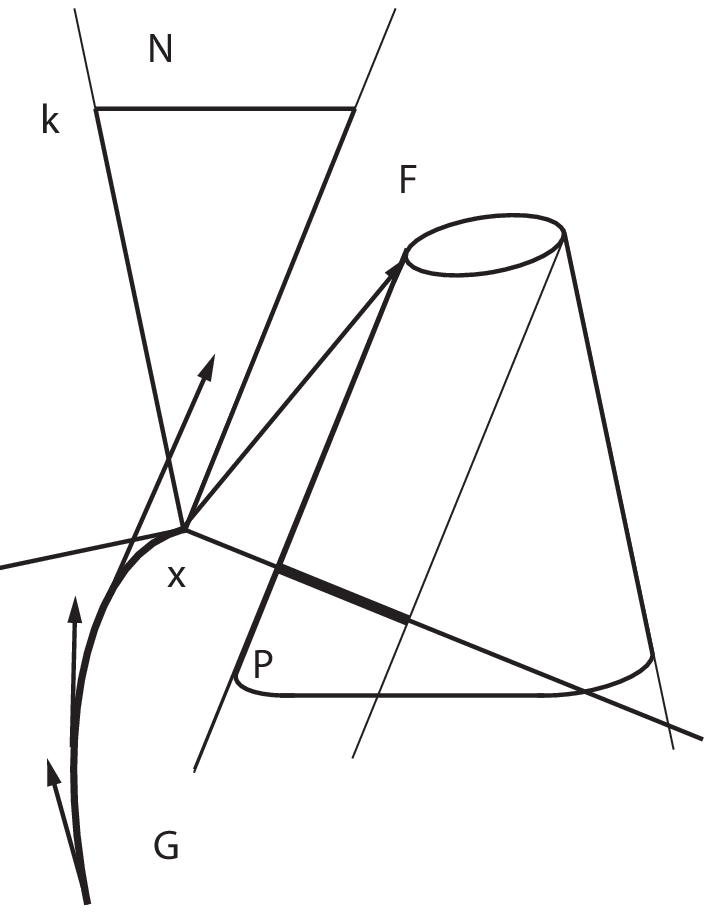}}
	}}
	\caption{Illustration of the effect of an increasing penalty
          parameter $\kappa>0$ on~\eqref{sec3:spdynamics}. For a fixed
          $(x,\lambda)\in \bd G\times\rlp$, the set
          $-\partial_{x}L(x,\lambda)=F(x,\lambda)-\kappa\sum_{k=1}^{m}\co\{\{0\},\partial
          g_{k}(x)\}$ enlarges as the penalty parameter $\kappa>0$
          increases (a), until the exact-penalty value is
          reached/exceeded (b) where the inclusion
          $P_{T_G(x)}(F(x,\lambda))\subset-\partial_{x}L(x,\lambda)$
          holds.}\label{sec4:fig1} 
        \vspace{-10pt}
\end{figure}

%\marginJC{There is no result stating the convergence properties of the
%  SPLD dynamics?}

\myclearpage
%% DISTRIBUTED IMPLEMENTATION %%%%%%%%%%%%%%%%%%%%%%%%%%%%%%%%%%%%%%%%%%%%%%%%%%
\section{Distributed Implementation}\label{sec5}

In this section, we describe the requirements that ensure that the
proposed saddle-point-like algorithm~\eqref{sec4:spldynamics} is
well-suited for distributed implementation.

Consider a network of $n\in\NN$ agents whose communication topology is
represented by an undirected and connected graph $\GG=(\VV,\EE)$,
where $\VV=\until{n}$ is the vertex set and $\EE\subset\VV\times\VV$
is the (symmetric) edge set. The objective of the agents is to
cooperatively solve the constraint minimization
problem~\eqref{sec3:probstatement}. We assume that the aggregate
objective function $f$ is additively separable, i.e.,
$f(x)=\sum_{i=1}^{n}f_{i}(x_{i})$, where $f_{i}$ and $x_{i}\in\rl$
denote the local objective function and state associated with agent
$i\in\until{n}$, respectively. Additionally, we assume that the
constraints of~\eqref{sec3:probstatement} are compatible with the
network topology described by $\GG$. Formally, we say the inequality
constraints $g_{k}(x)\leq0$, $k\in\until{m}$, are compatible with
$\GG$ if $g_{k}$ can be expressed as a function of some components of
the network state $x=(x_{1},\dots,x_{n})\in\rln$, which induce a
complete subgraph of $\GG$. A similar definition can be stated for the
equality constraints $h_{\ell}(x)=0$, $\ell\in\until{p}$.

In this network scenario, if $(x,\lambda)\in\inter G\times\rlp$, then
each agent $i\in\until{n}$ implements its primal
dynamics~\eqref{sec4:spldynamics}, where
$P_{T_G(x)}(F(x,\lambda))=F(x,\lambda)$, i.e.,
\begin{equation*}
  \dot{x}_{i}+~\sum_{\mathclap{\{\ell:a_{\ell i}\neq0\}}}~a_{\ell i}
  \bigg(\frac{1}{\mu}\bigg(~~~\sum_{\mathclap{\{j:a_{\ell j}\neq0\}}}~
  a_{\ell j} x_{j}-b_{\ell}\bigg)+\lambda_{\ell}\bigg)\in-\partial f_i(x_i),
\end{equation*}
and some dual dynamics~\eqref{sec4:spldynamics}, i.e.,
\begin{equation*}
  \dot{\lambda}_{\ell}=~\sum_{\mathclap{\{i:a_{\ell i}\neq0\}}}~a_{\ell i}x_{i}
  -b_{\ell},
\end{equation*}
where $\ell\in\until{p}$, corresponding to the Lagrange multipliers
for the constraints that the agent is involved in (alternatively)

Hence, in order for agent $i$ to be able to
implement its corresponding primal dynamics, it also needs access to
certain dual components $\lambda_{\ell}$ for which $a_{\ell
  i}\neq0$. If $(x,\lambda)\in\bd G\times\rlp$, then each agent
$i\in\until{n}$ implements the locally projected
dynamics~\eqref{sec4:spldynamics}, i.e.,
\begin{equation*}
  \dot{x}_{i}\in\bigcup_{\xi_{i}\in F_{i}(x,\lambda)}\xi_{i}-\max\bigg\{0,
  ~\sum_{\mathclap{\{j:n_{j}^{\star}\neq0\}}}~\xi_{j}n_{j}^{\star}(x,\xi)
  \bigg\}n_{i}^{\star}(x,\xi),
\end{equation*}
and the dual dynamics~\eqref{sec4:spldynamics} described above. Hence,
if the states of some agents are, at some time instance, involved in
the active inequality constraints, the respective agents need to
solve~\eqref{sec4:subproj}. We say that the saddle-point-like
algorithm~\eqref{sec4:spldynamics} is distributed over $\GG=(\VV,\EE)$
when the following conditions are satisfied:
\begin{enumerate}[\hspace{10pt}({C}1)]
\item The network constraints $h$ and $g$ are compatible with the graph $\GG$;
\item Agent $i$ knows its state $x_{i}\in\rl$ and its objective
  function $f_{i}$;
\item Agent $i$ knows its neighbors' states $x_{j}\in\rl$, their
  objective functions $f_{j}$, and
  \begin{enumerate}[(i)]
  \item the non-zero elements of every row of $A\in\rlpn$, and every
    $b_{\ell}\in\rl$ for which $a_{\ell i}\neq0$, and
  \item the active inequality constraints $g_{k}$ in which agent $i$
    and its neighbors are involved.
  \end{enumerate}
\end{enumerate}
(Note that, under these assumptions, it is also possible to have, for
each Lagrange multiplier, only one agent implement the corresponding
dual dynamics and then share the computed value with its neighboring
agents -- which are the ones involved in the corresponding equality
constraint).

Note that the saddle-point-like algorithm~\eqref{sec4:spldynamics} can
solve optimization scenarios where the agents' states belong to an
arbitrary Euclidean space. In contrast to consensus-based distributed
algorithms where each agent maintains, communicates and updates an
estimate of the complete solution vector of the optimization problem,
the saddle-point-like algorithm~\eqref{sec4:spldynamics} only requires
each agent to store and communicate its own component of the solution
vector. Thus, the algorithm scales well with respect to the number of
agents in the network. The following examples illustrate an
application of the above results to nonsmooth convex optimization
scenarios over a network of agents.

\begin{example}[Saddle-point-like dynamics for nonsmooth convex
  optimization]
  {\rm Consider a network of $n=50$ agents that seek to cooperatively
    solve the nonsmooth convex optimization problem
    \begin{equation}\label{sec5:ex1:problem}
      \begin{aligned}
        & \underset{x\in\rln}{\text{minimize}}
        & &\sum_{\mathclap{i\in\until{n}}}~x_{i}^{4}/4+\lvert x_{i}\rvert\\
        & \text{subject to}
        & &\circu\nolimits_{n}(0,1,1/2)x=\on/5,\\
        &&&~x_{i}\leq1/2,\ i\in\until{n},
      \end{aligned}
    \end{equation}
    where $x_{i}\in\rl$ denotes the state associated with agent
    $i\in\until{n}$, and $\circu\nolimits_{n}(0,1,1/2)$ is the
    tridiagonal circulant matrix~\cite{FB-JC-SM:08cor} encoding the
    network topology in its sparsity structure.  Although this
    specific example is academic, examples belonging to the same class
    of optimization problems arise in a variety of networked
    scenarios, see e.g.~\cite{PW-MDL:09}.  The generalized gradient of
    $f_{i}$ at $x_{i}\in\rl$ is
    \begin{equation*}
      \partial f_{i}(x_{i})=\begin{cases}
        \{x_{i}^{3}+1\}, &\text{if $x_{i}>0$},\\
        [-1,1], &\text{if $x_{i}=0$},\\
        \{x_{i}^{3}-1\}, &\text{if $x_{i}<0$}.
      \end{cases}
    \end{equation*}
    Figure~\ref{sec5:fig1} illustrates the asymptotic convergence of
    solutions of the saddle-point-like
    dynamics~\eqref{sec4:spldynamics} to the set
    $\sap(L)=\{x^{\star}\}\times M$, where
    $M=\{\lambda^{\star}\in\rlp\mid\lambda^{\star}\in-(\circu\nolimits_{n}(0,1,1/2)\circu\nolimits_{n}(0,1,1/2)^{\top})^{-1}\circu\nolimits_{n}(0,1,1/2)\partial
    f(x^{\star})\}$.}\oprocend
\end{example}
\begin{figure*}[!t]
  \centering\subfloat[Network state evolution]
  {\includegraphics[width=0.33\linewidth]{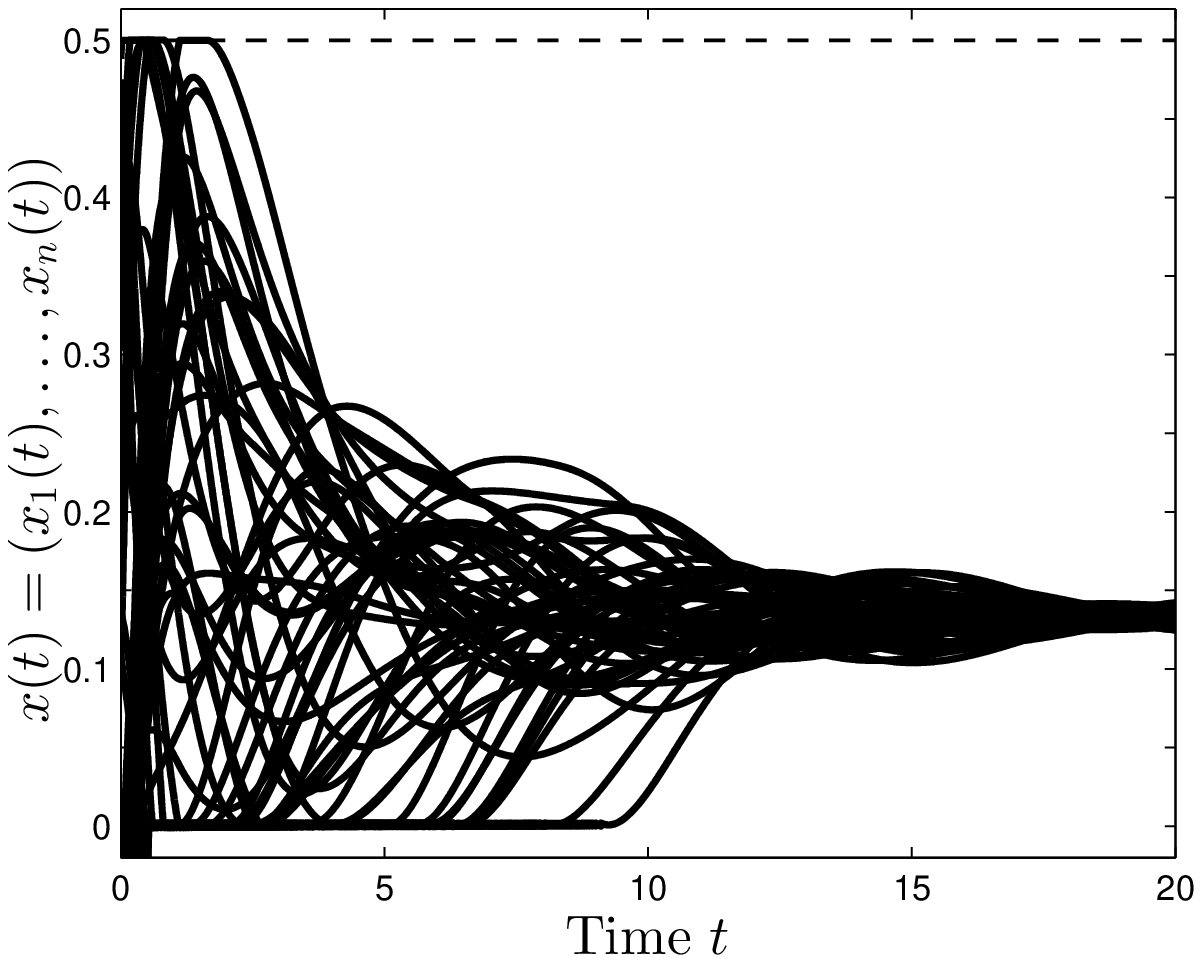}}
  \subfloat[Multiplier evolution]
  {\includegraphics[width=0.33\linewidth]{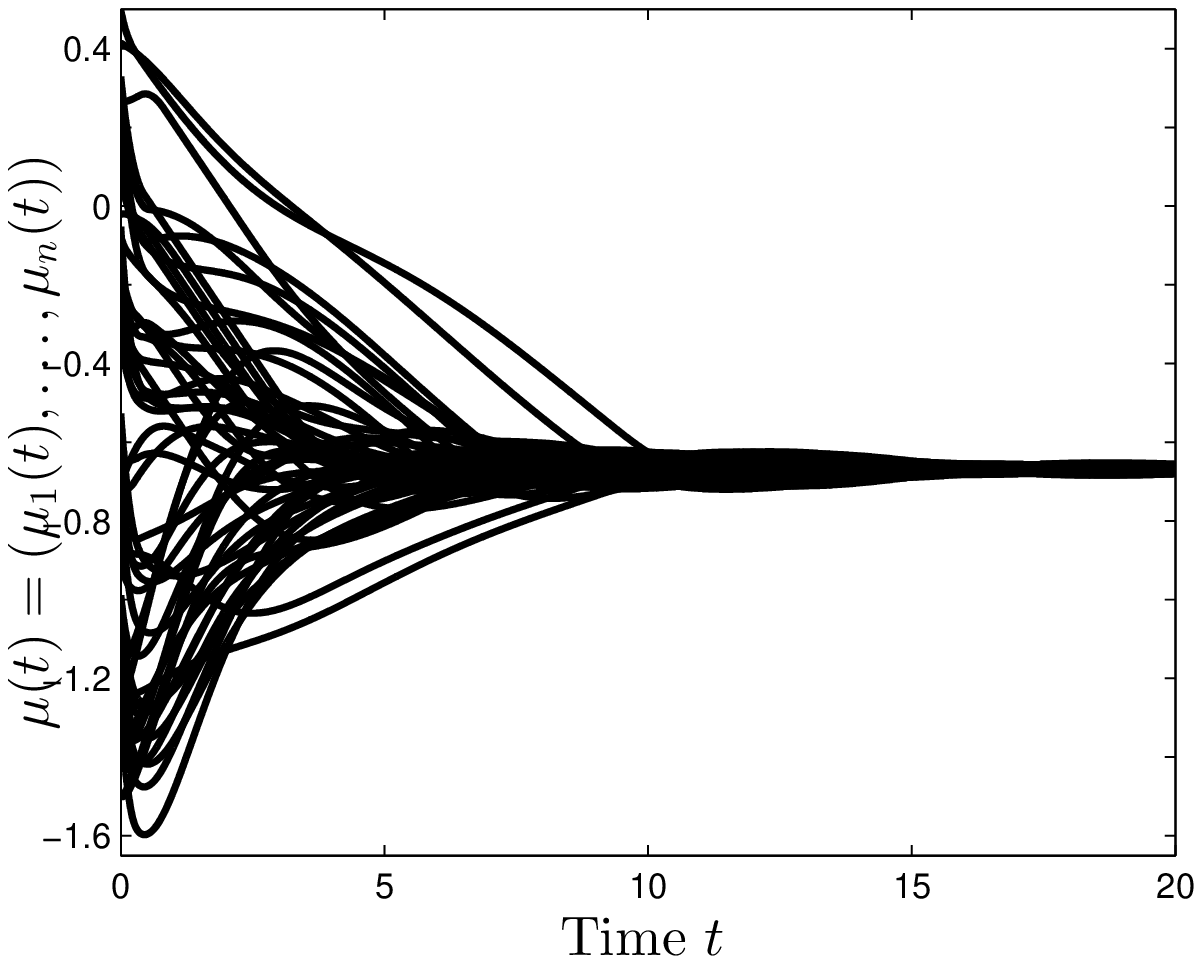}}
  \subfloat[Convergence]
  {\includegraphics[width=0.33\linewidth]{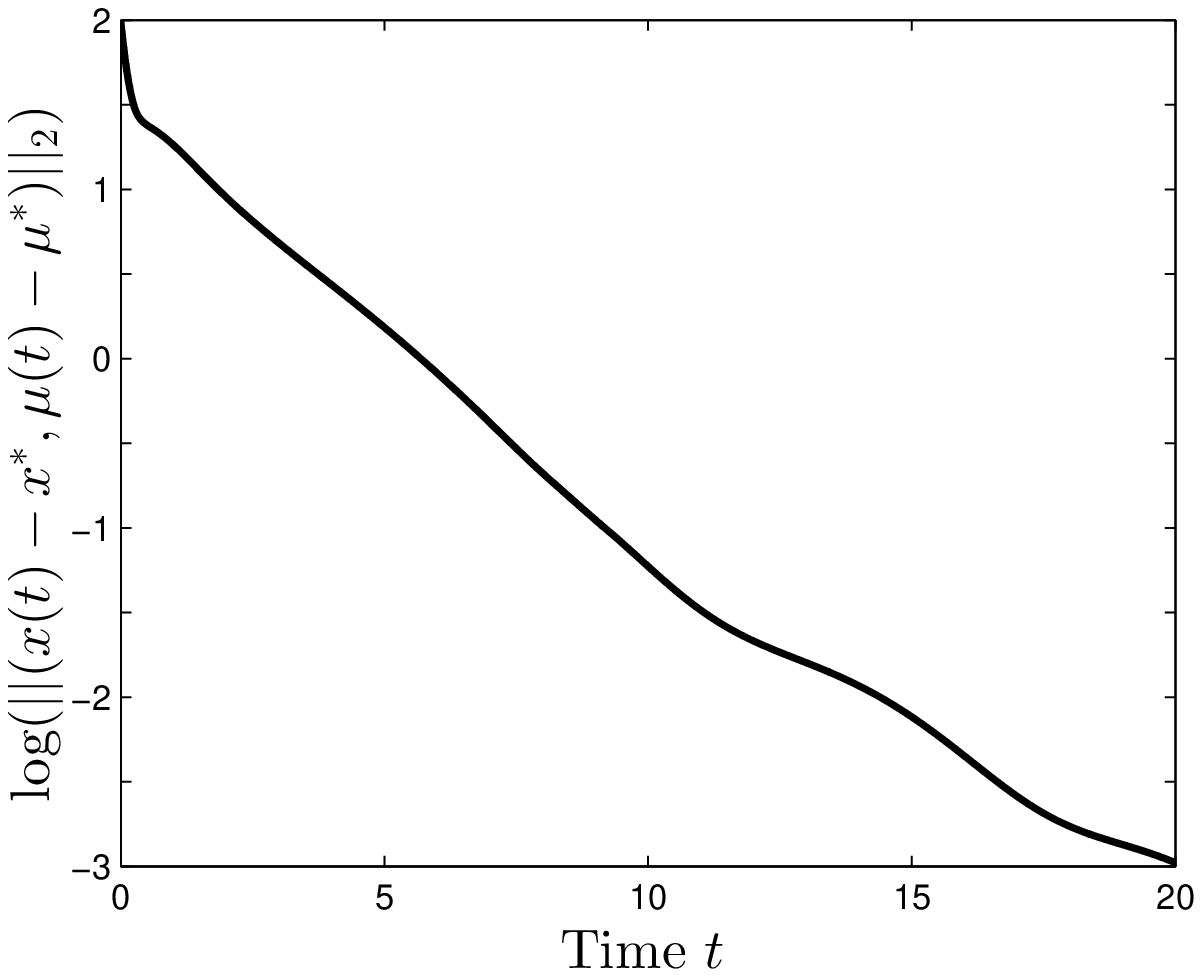}}
  \caption{(a) The network state evolution of
    algorithm~\eqref{sec4:spldynamics} solving the nonsmooth convex
    program~\eqref{sec5:ex1:problem}. The projection operator prevents
    the solutions of~\eqref{sec4:spldynamics} from violating the
    inequality constraints (depicted with a dashed line)
    of~\eqref{sec5:ex1:problem}. (b) The Lagrange multiplier evolution
    associated with the equality constraints
    of~\eqref{sec5:ex1:problem}. The initial conditions are randomly
    chosen from the interval $[-3/2,1/2]$. (c) Since the aggregate
    objective function of~\eqref{sec5:ex1:problem} is strictly convex,
    the solutions of~\eqref{sec4:spldynamics} converge asymptotically
    to the set $\sap(L)$.}\label{sec5:fig1}
	\vspace{-10pt}
\end{figure*}

\begin{example}[Saddle-point dynamics for equality constrained
  optimization]
  {\rm Consider a network of $n=50$ agents whose objective is to
    cooperatively solve the convex optimization problem
    \begin{equation}\label{sec5:ex2:problem}
      \begin{aligned}
        & \underset{x\in\rln}{\text{minimize}}
        & &\sum_{\mathclap{i\in\until{n}}}~f_{i}(x_{i})\\
        & \text{subject to}
        & &\trid\nolimits_n(1/2,1,-1/10)x=\on,\\
      \end{aligned}
    \end{equation}
    where $f_{i}\in\CCC(\rl,\rl)$ defined by
    \begin{equation*}
      f_{i}(x_{i})=\begin{cases}
        x_{i}^{2}, &\text{if $x_{i}\geq0$},\\
        x_{i}^{2}/2, &\text{if $x_{i}<0$},
      \end{cases}
    \end{equation*}
    is the objective function associated with agent
    $i\in\until{n}$, and $\trid\nolimits_{n}(1/2,1,-1/10)$ is the
    tridiagonal Toeplitz matrix~\cite{FB-JC-SM:08cor} of dimension
    $n\times n$. The network topology is encoded in the sparsity
    structure of $\trid\nolimits_{n}(1/2,1,-1/10)$.  The gradient
    of $f_{i}$ at $x_{i}\in\rl$ is $\nabla
    f_{i}(x_{i})=\max\{x_{i},2x_{i}\}$, and the generalized
    Hessian of $f_{i}$ at $x_{i}\in\rl$ is
    \begin{equation*}
      \partial(\nabla f_{i})(x_{i})=\begin{cases}
        \{2\}, &\text{if $x_{i}>0$},\\
        [1,2], &\text{if $x_{i}=0$},\\
        \{1\}, &\text{if $x_{i}<0$}.
      \end{cases}
    \end{equation*}
    Figure~\ref{sec5:fig2} illustrates the convergence and performance
    of solutions of the saddle-point dynamics~\eqref{sec3:spdynamics}
    to the set of saddle-points
    $\sap(L)=\{x^{\star}\}\times\{\lambda^{\star}\}$, where
    $\lambda^{\star}=-(\trid\nolimits_{n}(1/2,1,-1/10)\trid\nolimits_{n}(1/2,1,-1/10)^{\top})^{-1}\trid\nolimits_{n}(1/2,1,-1/10)\nabla
    f(x^{\star})$.}\oprocend
\end{example}

\begin{figure*}[!t] \centering \subfloat[Network state evolution]
  {\includegraphics[width=0.33\linewidth]{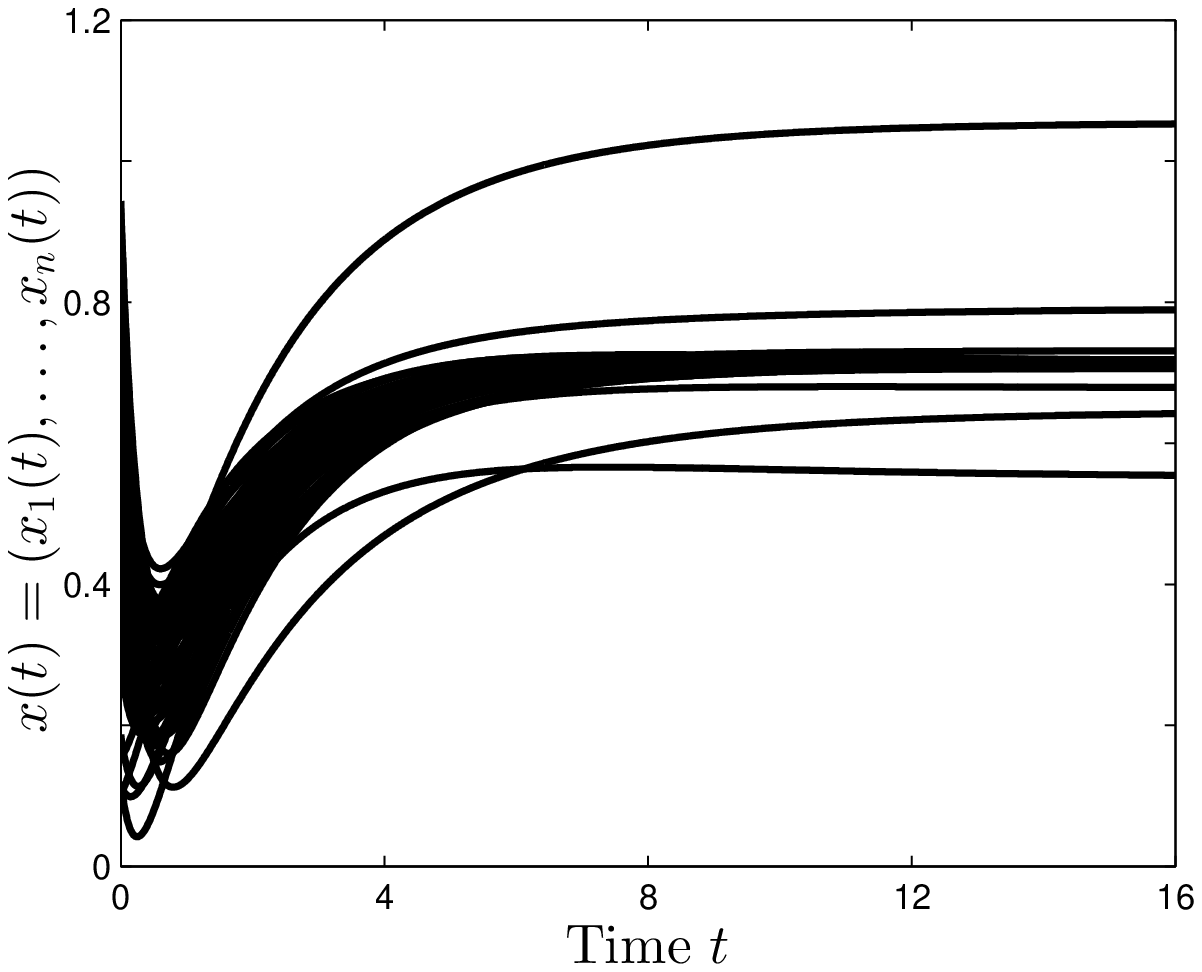}}
  \subfloat[Multiplier evolution]
  {\includegraphics[width=0.33\linewidth]{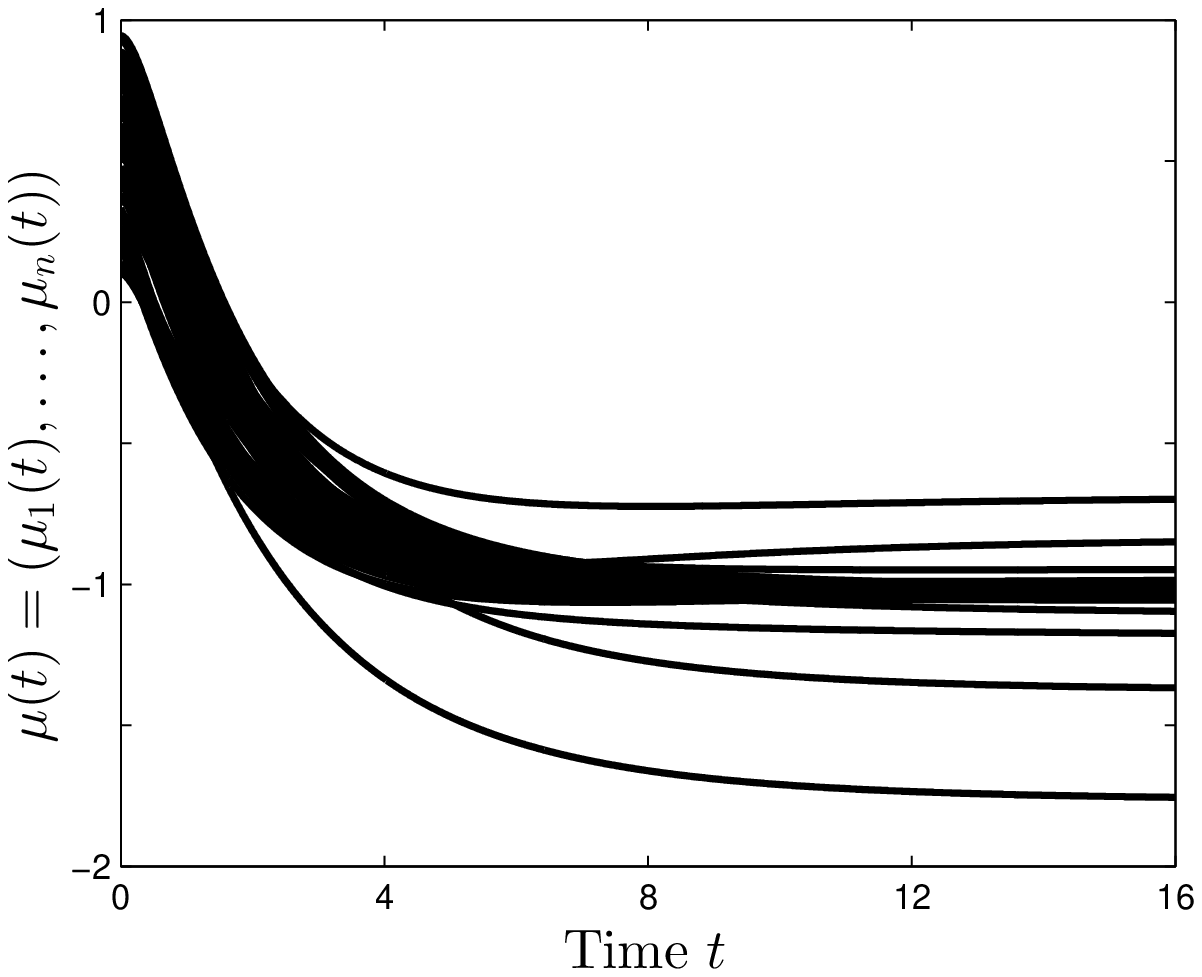}}
  \subfloat[Performance bound]
  {\includegraphics[width=0.33\linewidth]{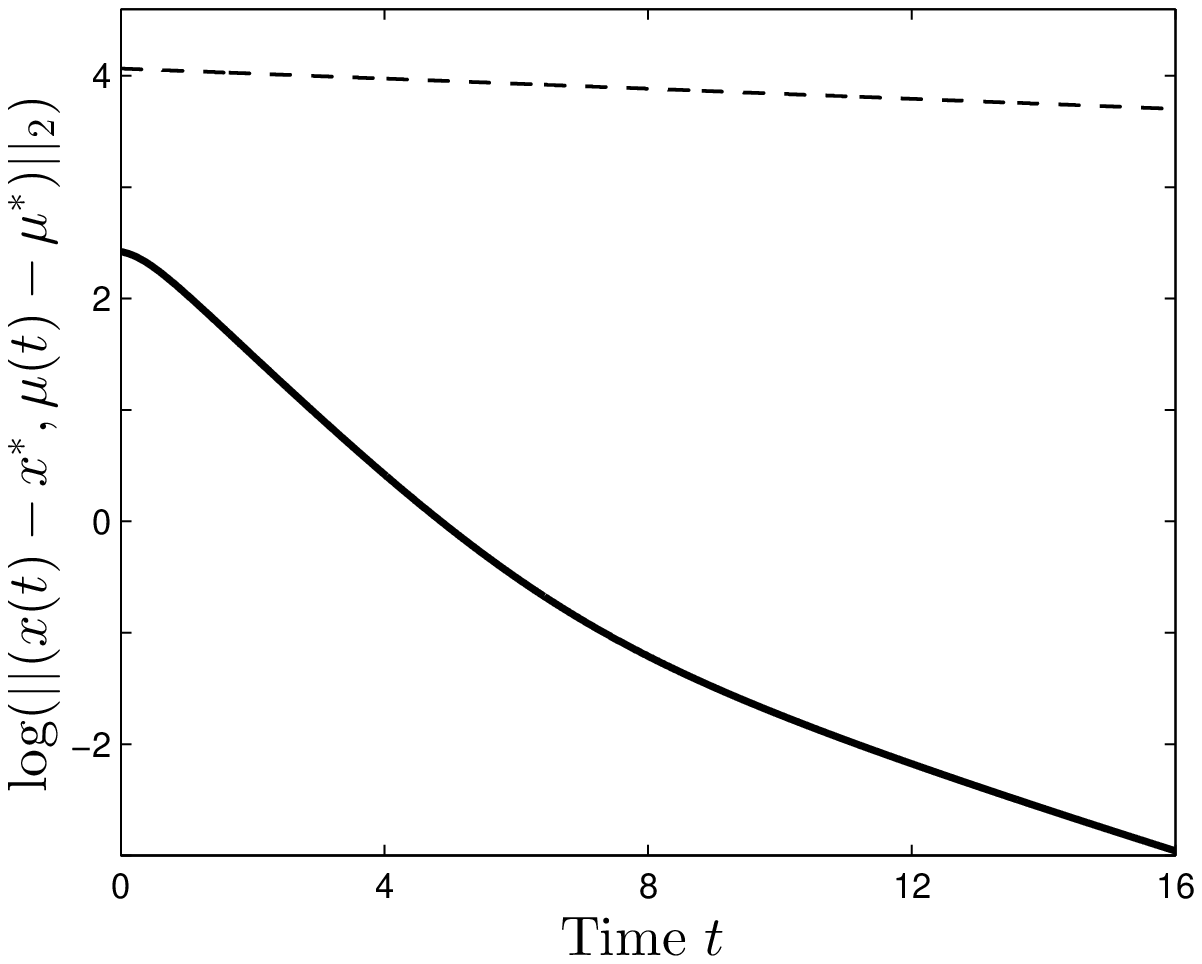}}
	\caption{(a) The network state evolution of
          algorithm~\eqref{sec3:spdynamics} solving the convex
          program~\eqref{sec5:ex2:problem}. (b) The Lagrange
          multiplier evolution associated with the equality
          constraints of~\eqref{sec5:ex2:problem}. The initial
          conditions are randomly chosen in the interval $(0,1]$. (c)
          Since $\partial(\nabla f)\succ0$ (in fact, the aggregate
          objective function of~\eqref{sec5:ex2:problem} is strongly
          convex), the hypothesis of Theorem~\ref{sec3:sub2:th:exp} is
          satisfied and thus, the solutions of~\eqref{sec3:spdynamics}
          converge to the singleton set
          $\sap(L)=\{x^{\star}\}\times\{\lambda^{\star}\}$ within the
          exponential performance bound (depicted by the dashed
          line).}\label{sec5:fig2}
	\vspace{-10pt}
\end{figure*}

\myclearpage
%% CONCLUSIONS %%%%%%%%%%%%%%%%%%%%%%%%%%%%%%%%%%%%%%%%%%%%%%%%%%%%%%%%%%%%%%%%%
\section{Conclusions}\label{sec6}

We have investigated the design of continuous-time solvers for a class
of nonsmooth convex optimization problems. Our starting point was an
equivalent reformulation of this problem in terms of finding the
saddle points of an augmented Lagrangian function. This reformulation
has naturally led us to study the associated saddle-point dynamics,
for which we established convergence to the set of solutions of the
nonsmooth convex program. The novelty of our analysis relies on the
identification of a global Lyapunov function for the saddle-point
dynamics.  Based on these results, we have introduced a discontinuous
saddle-point-like algorithm that enjoys the same convergence
properties and is fully amenable to distributed implementation over a
group of agents that seeks to collectively solve the optimization
problem. With respect to consensus-based approaches, the novelty of
our design is that it allows each individual agent to asymptotically
find its component of the solution by interacting with its neighbors,
without the need to maintain, communicates, or update a global
estimate of the complete solution vector.  We also established the
performance properties of the proposed coordination algorithms for
convex optimization scenarios subject to equality constraints. In
particular, we explicitly characterized the exponential convergence
rate under mild convexity and regularity conditions on the objective
functions. Future work will characterize the rate of convergence for
nonsmooth convex optimization problems subject to both inequality and
equality constraints, study the robustness properties of the proposed
algorithms against disturbances and link failures, identify suitable
(possibly aperiodic) stepsizes that guarantee convergence of the
discretization of our dynamics, design opportunistic state-triggered
implementations to efficiently use the
% computational, energy, and communication
capabilities of the network agents, and explore the extension of our
analysis and algorithm design to optimization problems defined over
infinite-dimensional state spaces.

\bibliographystyle{siam}
\bibliography{main}

\myclearpage
%% APPENDIX %%%%%%%%%%%%%%%%%%%%%%%%%%%%%%%%%%%%%%%%%%%%%%%%%%%%%%%%%%%%%%%%%%%%
\section*{Appendix}\label{app}
\renewcommand\thetheorem{A.\arabic{theorem}}

We gather in this appendix various intermediate results used in the
derivation of the main results of the paper. The following two results
characterize properties of the generalized Hessian of $\CCC$
functions. In each case, let $f\in\CCC(\rln,\rl)$ and consider the
set-valued map $\partial(\nabla f):\rln\rightrightarrows\rlnn$ as
defined in Section~\ref{sec2:sub1}. For $x,y\in\rln$, we let
$[x,y]=\{x+\theta(y-x)\mid\theta\in[0,1]\}$ and study the set
\begin{equation*}
	\partial(\nabla f([x,y]))=\bigcup_{z\in[x,y]}\partial(\nabla f)(z).
\end{equation*}

\begin{lemma}[Positive definiteness]\label{app:lm:posdef}
  Let $f\in\CCC(\rln,\rl)$ and suppose $\partial(\nabla
  f)\succ0$. Then, $\co\big\{\partial(\nabla f([x,y]))\big\}\succ0$
  for all $x,y\in\rln$.
\end{lemma}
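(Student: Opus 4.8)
The plan is to show that any convex combination of matrices drawn from $\partial(\nabla f)(z)$ as $z$ ranges over the segment $[x,y]$ is positive definite, by reducing the statement to a compactness and continuity argument combined with the hypothesis $\partial(\nabla f)\succ0$. First I would recall that, by definition, $\partial(\nabla f)(z)$ is a nonempty, convex and compact set of symmetric matrices for each $z$, and that the hypothesis $\partial(\nabla f)\succ0$ means every matrix in $\partial(\nabla f)(z)$ is positive definite, for every $z\in\rln$. A generic element of $\co\big\{\partial(\nabla f([x,y]))\big\}$ has the form $H=\sum_{i=1}^{r}\theta_i H_i$ with $\theta_i\geq0$, $\sum_i\theta_i=1$, and $H_i\in\partial(\nabla f)(z_i)$ for some $z_i\in[x,y]$; since each $H_i$ is symmetric and positive definite, $H$ is symmetric, and $\langle Hv,v\rangle=\sum_i\theta_i\langle H_iv,v\rangle>0$ for every $v\neq0$ because at least one $\theta_i>0$. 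This already gives $H\succ0$ for each individual element; the content of the lemma is really the uniform statement that a single positive constant bounds $\lambda_{\min}(H)$ from below over the whole (compact) set, which is what later proofs (the definition of $\eta$ in Theorem~\ref{sec3:sub2:th:exp}) invoke.

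For the uniform bound I would argue as follows. The segment $[x,y]$ is compact. The set-valued map $z\mapsto\partial(\nabla f)(z)$ is locally bounded and upper semi-continuous (it is a generalized gradient of the locally Lipschitzian map $\nabla f$, so Proposition~\ref{sec2:sub1:pr:genprop} applies componentwise, or one cites the analogous property for the generalized Hessian). Hence $\partial(\nabla f([x,y]))=\bigcup_{z\in[x,y]}\partial(\nabla f)(z)$ is a bounded set, and its closure, and therefore its closed convex hull $\co\big\{\partial(\nabla f([x,y]))\big\}$, is compact — this is exactly the content of Lemma~\ref{app:lm:compact} referenced in the main text. The function $H\mapsto\lambda_{\min}(H)$ is continuous on the space of symmetric matrices, so it attains its minimum $\eta_0$ over this compact set. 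It remains to check $\eta_0>0$. If $\eta_0=0$, there would exist $\bar H\in\co\big\{\partial(\nabla f([x,y]))\big\}$ with $\lambda_{\min}(\bar H)=0$, i.e.\ $\bar H$ not positive definite; but $\bar H$ is a convex combination of positive definite matrices (drawn from finitely many sets $\partial(\nabla f)(z_i)$, each of which is positive definite by hypothesis), which we just argued is positive definite — a contradiction. Therefore $\eta_0>0$ and $\co\big\{\partial(\nabla f([x,y]))\big\}\succ0$.

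The main obstacle, and the only point requiring care, is the passage from ``every element is positive definite'' to ``the family is uniformly positive definite'': positive definiteness is not a closed condition, so one cannot simply take limits. This is handled cleanly by the compactness of $\co\big\{\partial(\nabla f([x,y]))\big\}$ (Lemma~\ref{app:lm:compact}) together with continuity of $\lambda_{\min}$, as above; the contradiction argument then needs the observation that any element of the \emph{closed} convex hull is still a limit of finite convex combinations of positive definite matrices, and one checks that the Carath\'eodory-type representation of the limit point itself lies in $\co\{\partial(\nabla f)(z_i)\colon i\}$ for finitely many $z_i\in[x,y]$, each of which is a set of positive definite matrices — so the limit point is genuinely a convex combination of positive definite matrices and hence positive definite. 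Everything else is routine linear algebra. I would present the proof in the order: (1) pointwise positive definiteness of convex combinations; (2) compactness of $\co\big\{\partial(\nabla f([x,y]))\big\}$ via upper semi-continuity and Lemma~\ref{app:lm:compact}; (3) continuity of $\lambda_{\min}$ plus the contradiction argument to conclude the uniform bound.
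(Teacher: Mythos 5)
Your proof is correct, and its first part --- every element of $\co\{\partial(\nabla f([x,y]))\}$ is a finite convex combination of symmetric positive definite matrices and is therefore positive definite, because the cone of positive definite matrices is convex --- is exactly the paper's own proof of this lemma. The uniform lower bound on $\lambda_{\min}$ that you develop in your second part is not what the lemma itself asserts; the paper obtains that uniform bound only later, in the proof of Theorem~\ref{sec3:sub2:th:exp}, by combining the present lemma with the compactness of $\co\{\partial(\nabla f([x,y]))\}$ from Lemma~\ref{app:lm:compact} and the attainment of the minimum of $\lambda_{\min}(\cdot)$ over that compact set, which is essentially the argument you give (including the correct observation that, by Carath\'eodory, the minimizer is itself a finite convex combination of positive definite matrices, so the minimum cannot be zero).
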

\begin{proof}
  Since $\partial(\nabla f)(x)\succ0$ for all $x\in\rln$, it follows
  $\bigcup_{z\in[x,y]}\partial(\nabla f)(z)\succ0$. Moreover, since
  the cone of symmetric positive definite matrices is convex itself,
  it contains all convex combinations of elements in
  $\bigcup_{z\in[x,y]}\partial(\nabla f)(z)$, i.e., in particular
  $\co\big\{\partial(\nabla f([x,y]))\big\}\succ0$, concluding the
  proof.
\end{proof}

\begin{lemma}[Compactness]\label{app:lm:compact}
  Let $f\in\CCC(\rln,\rl)$. Then, the set $\partial(\nabla f([x,y]))$
  and its convex closure are both compact.
\end{lemma}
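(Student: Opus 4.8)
The plan is to show that $\partial(\nabla f([x,y]))$ is closed and bounded — hence compact — and then to deduce compactness of its convex closure from finite-dimensionality.

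First, observe that the segment $[x,y]\subset\rln$ is compact. Since $f\in\CCC(\rln,\rl)$, the gradient $\nabla f$ is locally Lipschitzian, so the set-valued map $\partial(\nabla f):\rln\rightrightarrows\rlnn$ is locally bounded and upper semi-continuous with nonempty, convex and compact values, exactly as in Proposition~\ref{sec2:sub1:pr:genprop} (applied to the locally Lipschitzian map $\nabla f$ in place of $f$). Covering $[x,y]$ by finitely many balls on each of which $\partial(\nabla f)$ is bounded, we conclude that $\partial(\nabla f([x,y]))=\bigcup_{z\in[x,y]}\partial(\nabla f)(z)$ is a bounded subset of $\rlnn$.

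Next, I would establish closedness. Let $(H_i)_{i\in\NN}\subset\partial(\nabla f([x,y]))$ with $H_i\to H$, and choose $z_i\in[x,y]$ with $H_i\in\partial(\nabla f)(z_i)$. By compactness of $[x,y]$, after passing to a subsequence we may assume $z_i\to z\in[x,y]$. Fix $\varepsilon>0$. Upper semi-continuity of $\partial(\nabla f)$ at $z$ gives, for all large $i$, $H_i\in\partial(\nabla f)(z_i)\subset\partial(\nabla f)(z)+\BB(0,\varepsilon)$, hence $\dist(H,\partial(\nabla f)(z))\leq\varepsilon$. Letting $\varepsilon\searrow0$ and using that $\partial(\nabla f)(z)$ is closed, we obtain $H\in\partial(\nabla f)(z)\subset\partial(\nabla f([x,y]))$. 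Thus $\partial(\nabla f([x,y]))$ is closed and bounded, hence compact.

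Finally, since $\partial(\nabla f([x,y]))$ is a compact subset of the finite-dimensional space $\rlnn$, Carath\'eodory's theorem implies that its convex hull $\co\{\partial(\nabla f([x,y]))\}$ is compact; in particular it is already closed, so it coincides with its convex closure, which is therefore compact. The only slightly delicate point is the closedness argument, which relies on combining the upper semi-continuity of the generalized Hessian with the compactness of the segment $[x,y]$; the remaining steps are routine.
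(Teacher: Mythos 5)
Your proof is correct and follows essentially the same route as the paper: boundedness of $\partial(\nabla f([x,y]))$ from local boundedness over the compact segment, closedness via upper semi-continuity of the generalized Hessian combined with compactness of $[x,y]$, and then compactness of the convex hull. The only differences are cosmetic improvements — you argue closedness directly (and correctly extract a convergent subsequence of the base points, which the paper glosses over) rather than by contradiction, and you justify compactness of the convex hull via Carath\'eodory's theorem where the paper simply asserts it.
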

\begin{proof}
  Boundedness of the set $\partial(\nabla f([x,y]))$ follows
  from~\cite[Proposition 5.15]{RTR-RJBW:98}. To show that
  $\partial(\nabla f([x,y]))$ is closed, take
  $\nu\in\cl\partial(\nabla f([x,y]))$. By definition, there exists
  $\{\nu_{n}\}\subset\partial(\nabla f([x,y]))$ such that
  $\nu_{n}\to\nu$. Since $\nu_{n}$ belongs to $\partial(\nabla
  f([x,y]))$, let us denote $\nu_{n}\in\partial(\nabla f)(z_{n})$,
  i.e., $\nu_{n}$ is a vector based at $z_{n}$. Similarly, let $z$ be
  the point at which the vector $\nu$ is based. Following the above
  arguments, we have $z_{n}\to z$. Since $[x,y]$ is compact and
  $z_{n}\in[x,y]$, we deduce $z\in[x,y]$. Assume, by contradiction,
  that $\nu\notin\partial(\nabla f)(z)$. Then, since $\partial (\nabla
  f)(z)$ is closed, there exists $\varepsilon>0$ such that
  $\{\nu\}\cap\partial(\nabla
  f)(z)+\BB(0,\varepsilon)=\emptyset$. Using upper semi-continuity,
  there exists $N\in\mathbb{N}$ such that if $n\geq N$, then
  $\partial(\nabla f)(z_{n})\subset\partial(\nabla
  f)(z)+\BB(0,\varepsilon)$. This fact is in contradiction with
  $\nu_{n}\to\nu$. Therefore, it follows $\nu\in\partial(\nabla
  f)(z)\subset\partial(\nabla f([x,y]))$, and we conclude
  $\partial(\nabla f([x,y]))$ is closed. Thus, $\partial(\nabla
  f([x,y]))$ is compact, and so is $\co\big\{\partial(\nabla
  f([x,y]))\big\}$, concluding the proof.
\end{proof}

\end{document}